\numberwithin{equation}{section}
\newtheorem{Theorem}{Theorem}[section]
\newtheorem{Proposition}[Theorem]{Proposition}
\newtheorem{Corollary}[Theorem]{Corollary}
\newtheorem{Lemma}[Theorem]{Lemma}
\newtheorem{Definition}[Theorem]{Definition}
\def\supp{{\rm supp}\ }
\def\bbZ{\mathbb{Z}}
\def\bbR{\mathbb{R}}
\begin{document}

\title{Weighted Estimates for Multilinear Fourier Multipliers}
\thanks{This work was supported partially by the
National Natural Science Foundation of China(10971105 and 10990012).}

\author{Kangwei Li}
\email{likangwei9@mail.nankai.edu.cn}

\address{School of Mathematical Sciences and LPMC,  Nankai University,
      Tianjin~300071, China}

\author{Wenchang Sun}

\email{sunwch@nankai.edu.cn}

\address{School of Mathematical Sciences and LPMC,  Nankai University,
      Tianjin~300071, China}

\begin{abstract}
We prove a H\"{o}rmander  type multiplier theorem for
multilinear Fourier multipiers with multiple weights.
We also
give   weighted estimates for their commutators with vector $BMO$ functions.
\end{abstract}

\keywords{Multiple weights, Multilinear Fourier multipliers}
\maketitle

\section{Introduction and Main Results}
Multilinear Calder\'on-Zygmund theory was first studied in Coifman and Meyer's works \cite{CM,CM1}.
 Since Lacey and Thiele¡¯s work on the bilinear Hilbert transform \cite{LT1,LT2}, it
has been widely studied by many authors in harmonic analysis.
For an overview, we refer to \cite{DLY,DGY,DGGLY,GLY,GT,HY1,HY2,MW,YYZ} and references therein.

In this paper, we study the boundedness of multilinear Fourier multipliers.
Speficically, we consider the $N$-linear Fourier multiplier operator $T_m$ defined by
\begin{eqnarray*}
&&T_m(f_1,\cdots,f_N)(x)\\
&&\quad=\int_{\bbR^{Nn}}e^{2\pi ix\cdot(\xi_1+\cdots+\xi_N)}m(\xi_1,\cdots,\xi_N)\widehat{f_1}(\xi_1)\cdots \widehat{f_N}(\xi_N)d\xi_1\cdots d\xi_N
\end{eqnarray*}
for $f_1,\cdots,f_N\in \mathscr{S}(\bbR^n)$, where
$m\in C^s(\bbR^{Nn}\setminus \{0\})$ satisfies
\begin{equation}\label{eq:e1}
|\partial^{\alpha_1}_{\xi_1}\cdots \partial^{\alpha_N}_{\xi_N}m(\xi_1,\cdots,\xi_N)|
\le
C_{\alpha_1,\cdots,\alpha_N}(|\xi_1|+\cdots+|\xi_N|)^{-(|\alpha_1|+\cdots+|\alpha_N|)}
\end{equation}
for all $|\alpha_1|+\cdots+|\alpha_N|\le s$, and $N\ge 2$ is an integer.

In \cite{CM}, Coifman and Meyer proved that if $s$ is a sufficient large integer, then
$T_m$ is bounded from $L^{p_1}(\bbR^n)\times \cdots \times L^{p_N}(\bbR^n)$ to $L^p(\bbR^n)$ for
all $1<p_1,\cdots,p_N,p<\infty$ satisfying $1/{p_1}+\cdots+1/{p_N}=1/p$.

In \cite{T}, Tomita gave a H\"{o}rmander type theorem for multilinear Fourier multipliers.
As a consequence, $T_m$ is bounded from $L^{p_1}(\bbR^n)\times \cdots \times L^{p_N}(\bbR^n)$ to $L^p(\bbR^n)$ for
all $1<p_1,\cdots,p_N,p<\infty$ satisfying $1/{p_1}+\cdots+1/{p_N}=1/p$ with
$s=\lfloor Nn/2\rfloor+1$ in $(\ref{eq:e1})$, where $\lfloor Nn/2\rfloor$ is the integer part of $Nn/2$.
Grafakos and Si \cite{GS} gave similar results for the case $p\le 1$ by using   $L^r$-based Sobolev spaces,
$1 < r \le2$.

In \cite{FT}, Fujita and Tomita studied the weighted estimates of $T_m$ under the H\"{o}rmander condition and
classical $A_p$ weights.

In \cite{LOPTG}, Lerner,  Ombrosi, P\'{e}rez,  Torres and  Trujillo-Gonz\'{a}lez
introduced the $A_{\vec{P}}$ condition for multiple weights.
\begin{Definition}
Let $\vec{P}=(p_1,\cdots, p_N)$ with $1\le p_1, \cdots, p_N<\infty$ and $1/{p_1}+\cdots+1/{p_N}=1/p$. Given $\vec{w}=(w_1, \cdots, w_N)$. Set
\[
  v^{}_{\vec{w}}=\prod_{i=1}^N w_i^{p/{p_i}}.
\]
We say that $\vec{w}$ satisfies the $A_{\vec{P}}$ condition if
\begin{equation}\label{eq:e2}
\sup_Q\bigg( \frac{1}{|Q|}\int_Q v^{}_{\vec{w}}\bigg)^{1/p}
    \prod_{i=1}^N\bigg(\frac{1}{|Q|}\int_Q w_i^{1-p'_i}\bigg)^{1/{p_i'}}<\infty.
\end{equation}
When $p_i=1$, then $\bigg(\frac{1}{|Q|}\int_Q w_i^{1-p'_i}\bigg)^{1/{p_i'}}$ is understood as $(\inf_Q w_i)^{-1}$.
\end{Definition}

In \cite{BD}, Bui and Duong studied the boundedness of  $T_m$ with multiple weights under the condition (\ref{eq:e1}).
They also gave a result on commutators.

In this paper, we consider the weighted estimates of $T_m$ with multiple weights. Instead of (\ref{eq:e1}),
we  consider the H\"{o}rmander condition. Moreover, we do not assume that $s$ is an integer. To be precise, we prove the following.
\begin{Theorem}\label{thm:main}
Let $\vec{P}=(p_1,\cdots, p_N)$ with $1< p_1, \cdots, p_N<\infty$ and $1/{p_1}+\cdots+1/{p_N}=1/p$.  Suppose that
$Nn/2< s\le Nn$, that $m\in L^{\infty}(\bbR^{Nn})$ with
\begin{equation}\label{eq:e3}
\sup_{R>0}\|m(R\xi)\chi^{}_{\{1<|\xi|<2\}}\|_{H^s(\bbR^{Nn})}<\infty,
\end{equation}
that $r_0:=Nn/s<p_1,\cdots,p_N<\infty$ and that $\vec{w}\in A_{\vec{P}/{r_0}}$. Then
\begin{eqnarray}
&& \|T_m(\vec{f})\|_{L^p(v^{}_{\vec{w}})}\le C \prod_{i=1}^N \|f_i\|_{L^{p_i}(w_i)}.\label{eq:e4}
\end{eqnarray}
\end{Theorem}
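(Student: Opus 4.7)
My plan is to reduce Theorem~\ref{thm:main} to a pointwise sharp-maximal estimate by the multi-sublinear maximal operator
\[
\mathcal{M}_{r_0}(\vec f)(x)=\sup_{Q\ni x}\prod_{i=1}^N\Big(\tfrac{1}{|Q|}\int_Q|f_i|^{r_0}\Big)^{1/r_0},
\]
and then to invoke the multi-weight boundedness of $\mathcal{M}_{r_0}$ established in~\cite{LOPTG}: under the present hypotheses $p_i>r_0$ and $\vec w\in A_{\vec P/r_0}$, one has $\mathcal{M}_{r_0}:L^{p_1}(w_1)\times\cdots\times L^{p_N}(w_N)\to L^p(v^{}_{\vec w})$. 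Since $A_{\vec P/r_0}$ also forces $v^{}_{\vec w}\in A_\infty$, the Fefferman--Stein inequality reduces everything to proving
\[
M^{\#}_\delta\!\big(T_m(\vec f)\big)(x)\le C\,\mathcal{M}_{r_0}(\vec f)(x)
\]
for some fixed $0<\delta<1/N$.

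\textbf{Dyadic decomposition and kernel estimates.} Pick a radial $\psi\in C^\infty_c(\bbR^{Nn})$ with $\supp\psi\subset\{1/2<|\xi|<2\}$ and $\sum_{j\in\bbZ}\psi(2^{-j}\xi)\equiv 1$ off the origin, set $m_j(\xi)=m(\xi)\psi(2^{-j}\xi)$, $T_j=T_{m_j}$, and let $K_j$ be the kernel of $T_j$. After the rescaling $\xi\mapsto 2^j\xi$, hypothesis (\ref{eq:e3}) yields a uniform bound $\|m_j(2^j\cdot)\|_{H^s(\bbR^{Nn})}\le C$. Plancherel, applied to the identity $\widehat{(1+|\cdot|^2)^{s/2}K_j(2^{-j}\cdot)}=(I-\Delta)^{s/2}[m_j(2^j\cdot)]$, then gives the central weighted $L^2$ bound
\[
\int_{\bbR^{Nn}}(1+2^j|\vec y|)^{2s}|K_j(\vec y)|^2\,d\vec y\le C\,2^{jNn}.
\]
Splitting the defining integral of $T_j(\vec f)(x)$ over dyadic shells $\{|\vec y|\sim 2^{l-j}\}$, $l\ge 0$, and applying Cauchy--Schwarz against the weight $(1+2^j|\vec y|)^{-s}$ gives, thanks to the identity $r_0=Nn/s$,
\[
|T_j(\vec f)(x)|\le C\sum_{l\ge 0}2^{-l\varepsilon}\prod_{i=1}^N\Big(\tfrac{1}{|2^{l-j}Q(x)|}\!\int_{2^{l-j}Q(x)}\!|f_i|^{r_0}\Big)^{1/r_0},
\]
where a small $\varepsilon>0$ is produced by interpolating the $H^s$ bound with the trivial $L^\infty$ control on $m_j$ to replace $r_0$ by a slightly smaller exponent.

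\textbf{From the local bound to $M^{\#}_\delta$.} For a cube $Q\ni x$ with $\ell(Q)\sim 2^{-j_0}$, split each $f_i=f_i\chi^{}_{3Q}+f_i\chi^{}_{(3Q)^c}$ and expand $T_m(\vec f)$ into $2^N$ mixed terms. The fully local term is controlled by Kolmogorov's inequality together with the unweighted endpoint bound $T_m:L^{r_0}\times\cdots\times L^{r_0}\to L^{r_0/N,\infty}$, which follows from the Tomita/Grafakos--Si theorem (cf.\ \cite{T,GS}) at the exponent $r_0=Nn/s$ (available precisely because $s>Nn/2$). Each mixed term is handled by summing the kernel estimate above in $j$, split into $j\le j_0$ and $j>j_0$; in both ranges the geometric decay in $l$ and $|j-j_0|$ collapses the sum to $C\,\mathcal{M}_{r_0}(\vec f)(x)$, completing the sharp-maximal domination and hence Theorem~\ref{thm:main}.

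\textbf{Main obstacle.} The technically delicate step is extracting usable kernel decay from the fractional $H^s$-based hypothesis (\ref{eq:e3}), as opposed to the pointwise condition (\ref{eq:e1}) used by Bui--Duong~\cite{BD}: since $s$ is not assumed integer, direct integration by parts is unavailable, and the gain must come from $(I-\Delta)^{s/2}$ combined with Plancherel. Matching the decay exponent to the scaling so as to hit exactly $r_0=Nn/s$ is what dictates the natural weight class $A_{\vec P/r_0}$, and the tiny interpolation with $\|m\|_\infty$ that upgrades $r_0$ to $r_0-\varepsilon$ and makes the geometric series in $l$ converge is the principal subtlety of the argument.
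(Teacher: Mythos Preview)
Your overall architecture---dyadic decomposition of $m$, a pointwise sharp-maximal domination by a multi-sublinear maximal function, Fefferman--Stein, and then the weighted bound for $\mathcal{M}_{p}$ from \cite{LOPTG,BD}---is exactly the paper's. The difference, and the place where your sketch has a real gap, is that you try to run the whole argument at the \emph{critical} exponent $r_0=Nn/s$, while the paper first passes to a strictly larger exponent $p_0=rr_0>r_0$ using the openness of the multilinear weight classes.

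Two concrete problems arise at $r_0$:
\begin{itemize}
\item The ``fully local'' piece needs $T_m:L^{r_0}\times\cdots\times L^{r_0}\to L^{r_0/N,\infty}$. Neither Tomita \cite{T} nor Grafakos--Si \cite{GS} proves this endpoint; both require $p_i>Nn/s$ strictly. Saying it is ``available precisely because $s>Nn/2$'' conflates the hypothesis on $s$ with an endpoint bound that is not in the literature you cite.
\item Your kernel/shell estimate at exponent $r_0$ yields decay $2^{-l(s-Nn/r_0)}=2^{0}$, so the $l$-series (and likewise the $k$-series in the oscillation estimate) diverges. You patch this by ``interpolating the $H^s$ bound with the trivial $L^\infty$ control on $m_j$'' to manufacture a slightly smaller exponent and a gain $2^{-l\varepsilon}$. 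But $L^\infty$ control of $m$ carries no smoothness; interpolating $H^s$ with $L^2$ (which is all $\|m\|_\infty$ gives after localization) only produces $H^{s'}$ for $s'<s$, i.e., \emph{less} decay, not more. No such $\varepsilon>0$ is available from this argument. Relatedly, your pointwise bound for $|T_j(\vec f)(x)|$ has no decay in $j$, so the sum over $j\le j_0$ in the mixed terms does not ``collapse'' as asserted.
\end{itemize}

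The paper's fix resolves both issues at once. Since $\vec w\in A_{\vec P/r_0}$, the self-improvement property of multilinear $A_{\vec P}$ weights (reverse H\"older, as in \cite[Lemma~6.1]{LOPTG}) gives $r>1$ with $\vec w\in A_{\vec P/(rr_0)}$; moreover $r$ can be taken so small that $p_0:=rr_0$ satisfies $r_0<p_0<\min_i p_i$, $Nn/p_0<s$, and $Nn/p_0>s-1$. Working at $p_0$, the local piece uses the \emph{strong} bound $T_m:(L^{p_0})^N\to L^{p_0/N}$ from \cite{GS}, the dyadic $k$-series converges with exponent $s-Nn/p_0>0$, and the $j$-sum is split at the scale of $Q$: for large $j$ one uses the $H^s$ bound directly, while for small $j$ one inserts a gradient (mean-value) estimate, which converges because $Nn/p_0>s-1$. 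The outcome is $M_\delta^\sharp(T_m\vec f)\lesssim \mathcal{M}_{p_0}(\vec f)$, and then $\vec w\in A_{\vec P/p_0}$ finishes via Proposition~\ref{prop:bd}. In short: the missing idea in your proof is the weight-class openness that lets you step off the critical exponent; your ``interpolation with $L^\infty$'' does not substitute for it.
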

Recall that the Sobolev space $H^s$ consists of all $f\in\mathscr{S}'$ such that
\[
  \|f\|_{H^s}:=\|(I-\Delta)^{s/2}f\|_{L^2}<\infty,
\]
where $(I-\Delta)^{s/2}f=\mathcal{F}^{-1}((1+4\pi^2 |\xi|^2)^{s/2}\hat{f}(\xi))$. If $s$ is an integer, then
$\|f\|_{H^s}\asymp \sum_{|\alpha|\le s}\|\partial^\alpha f\|_{L^2}$.

\begin{Corollary}\label{cor:c1}
Let $1<p_1, \cdots, p_N<\infty$ with $1/{p_1}+\cdots+1/{p_N}=1/p$. Suppose that
$Nn/2< s\le Nn$, that $m\in L^\infty(\bbR^{Nn})$ satisfies $(\ref{eq:e3})$, that $1<p<\infty$, that $p^{}_{\beta}:=\min\{p_1,\cdots,p_N\}<r_0'=(Nn/s)'$ and that
\[
  (w_1,\cdots,w^{}_{\beta-1}, v_{\vec{w}}^{1-p'},w^{}_{\beta+1},\cdots,w_N)\in A_{\vec{\tilde{P}}/{r_0}},
\]
where $\vec{\tilde{P}}=(p_1,\cdots,p^{}_{\beta-1},p',p^{}_{\beta+1},\cdots,p_N)$. Then
$(\ref{eq:e4})$ holds.
\end{Corollary}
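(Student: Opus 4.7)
The natural approach is a weighted duality argument: identify a partial adjoint of $T_m$ that still satisfies the hypotheses of Theorem~\ref{thm:main}, and then use $L^p(v_{\vec w})$-duality to interchange the role of the $\beta$-th input with that of the output.

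First I would check that the $\beta$-th partial adjoint $T_m^{*\beta}$, defined via the pairing
\[
\int T_m(\vec f)\,g\,dx=\int T_{m^{*\beta}}(f_1,\dots,f_{\beta-1},g,f_{\beta+1},\dots,f_N)\,f_\beta\,dx,
\]
is again an $N$-linear Fourier multiplier, with symbol
\[
m^{*\beta}(\eta_1,\dots,\eta_N)=m\bigl(\eta_1,\dots,\eta_{\beta-1},-\textstyle\sum_{i=1}^{N}\eta_i,\eta_{\beta+1},\dots,\eta_N\bigr).
\]
A direct Fourier-side computation yields this identity on Schwartz inputs. Since $m^{*\beta}$ is the pullback of $m$ under an invertible linear change of variables on $\bbR^{Nn}$ for which $|\eta|\asymp|\xi|$ and the $H^s$-norm is preserved up to a fixed constant, the H\"ormander condition $(\ref{eq:e3})$ passes from $m$ to $m^{*\beta}$ with a comparable constant.

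Next I would invoke $L^p(v_{\vec w})$-duality to write
\begin{equation*}
\|T_m(\vec f)\|_{L^p(v_{\vec w})}=\sup_{\|g\|_{L^{p'}(v_{\vec w}^{1-p'})}\le 1}\bigg|\int T_m(\vec f)\,g\,dx\bigg|,
\end{equation*}
replace $T_m$ by $T_{m^{*\beta}}$ via the adjoint identity, and estimate the resulting integral by the weighted H\"older inequality pairing $L^{p_\beta'}(w_\beta^{1-p_\beta'})$ with $L^{p_\beta}(w_\beta)$. It then remains to apply Theorem~\ref{thm:main} to $T_{m^{*\beta}}$ with the exponent tuple $\vec{\tilde P}=(p_1,\dots,p_{\beta-1},p',p_{\beta+1},\dots,p_N)$ and the weight tuple $(w_1,\dots,w_{\beta-1},v_{\vec w}^{1-p'},w_{\beta+1},\dots,w_N)$.

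The hypothesis check for Theorem~\ref{thm:main} is essentially bookkeeping. The reciprocals satisfy $\sum_{i\ne\beta}1/p_i+1/p'=1-1/p_\beta=1/p_\beta'$, matching the required output exponent; the condition $r_0<p'$ follows from $p<p_\beta<r_0'$, where the first inequality uses $N\ge 2$ and the second is the hypothesis $p_\beta<r_0'$; the remaining conditions $r_0<p_i$ for $i\ne\beta$ are inherited; and the $A_{\vec{\tilde P}/r_0}$ condition is precisely the one assumed. Finally, a short computation using $(1-p')/p'=-1/p$ together with $v_{\vec w}=\prod_i w_i^{p/p_i}$ shows that the composite ``output weight'' produced by Theorem~\ref{thm:main} in this configuration collapses to $w_\beta^{-p_\beta'/p_\beta}=w_\beta^{1-p_\beta'}$, which is exactly what is needed for the weighted H\"older step above. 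Taking the supremum over $g$ then yields $(\ref{eq:e4})$. The only step requiring a genuine (if elementary) verification is this output-weight identification; everything else is duality and a routine linear change of variables.
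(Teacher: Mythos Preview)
Your proof is correct and follows essentially the same duality argument as the paper: both pass to the $\beta$-th transpose multiplier $m^{*\beta}(\eta)=m(\eta_1,\dots,-\sum_i\eta_i,\dots,\eta_N)$, verify that it still satisfies the H\"ormander condition, apply Theorem~\ref{thm:main} with the tuple $\vec{\tilde P}$ and the assumed weights, identify the resulting output weight as $w_\beta^{1-p_\beta'}$, and conclude by duality. The only imprecise phrase is that the conditions $r_0<p_i$ for $i\ne\beta$ are ``inherited'': they are not hypotheses of the corollary but follow (as in the paper) from $r_0<p_\beta'<p_i$, where $p_\beta'>r_0$ comes from $p_\beta<r_0'$ and $p_i>p_\beta'$ from $1/p_\beta'=1/p'+\sum_{j\ne\beta}1/p_j>1/p_i$.
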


Commutators are a class of non-convolution operators \cite{CM1,DGY,G,HMY}.
Here we consider the commutator of a vector $BMO$ function and the multilinear operator with multiple weights. Given a locally
integrable vector function $\vec{b}=(b_1,\cdots, b_N)$, we define the $N$-linear commutator of $\vec{b}$ and
$N$-linear operator $T_m$ by
\[
  T_{m;\vec{b}}(\vec{f})=\sum_{i=1}^NT_{m;\vec{b}}^i(\vec{f}),
\]
where
\[
  T_{m;\vec{b}}^i(\vec{f})=b_iT_m(\vec{f})-T_m(f_1,\cdots,b_if_i,\cdots,f_N).
\]
If $\vec{b}\in BMO^N$, define $\|\vec{b}\|_{BMO^N}=\sup_{i=1,\cdots,N}\|b_i\|_{BMO}$.

\begin{Theorem}\label{thm:main1}
Under the hypotheses of Theorem~\ref{thm:main}. If moreover $\vec{b}\in BMO^N$, then
\begin{equation}\label{eq:commu}
\|T_{m;\vec{b}}(\vec{f})\|_{L^p(v^{}_{\vec{w}})}\le C \|\vec{b}\|_{BMO^N}\prod_{i=1}^N \|f_i\|_{L^{p_i}(w_i)}.
\end{equation}
\end{Theorem}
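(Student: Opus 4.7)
The plan is to derive Theorem~\ref{thm:main1} from Theorem~\ref{thm:main} via the classical analytic-family (Cauchy integral) device of Coifman--Rochberg--Weiss, adapted to the multilinear weighted setting. Since $T_{m;\vec b}=\sum_{i=1}^N T_{m;\vec b}^i$, it is enough to bound each $T_{m;\vec b}^i$ by $C\|b_i\|_{BMO}\prod_{j=1}^N\|f_j\|_{L^{p_j}(w_j)}$; summing over $i$ then produces \eqref{eq:commu}, because $\sum_i\|b_i\|_{BMO}\le N\|\vec b\|_{BMO^N}$.

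Fix $i$ and, for $z\in\bbC$, consider the analytic family
\[
U_z(\vec f)(x):=e^{zb_i(x)}\,T_m\bigl(f_1,\ldots,e^{-zb_i}f_i,\ldots,f_N\bigr)(x).
\]
Differentiation in $z$ at $z=0$, together with the multilinearity of $T_m$ in its $i$-th entry, recovers $T_{m;\vec b}^i(\vec f)$, so the Cauchy integral formula gives the representation
\[
T_{m;\vec b}^i(\vec f)(x)=\frac{1}{2\pi i}\oint_{|z|=\varepsilon}\frac{U_z(\vec f)(x)}{z^2}\,dz
\]
for every $\varepsilon>0$. Taking $L^p(v_{\vec w})$-norms by Minkowski (or its $p<1$ quasi-norm analogue) reduces everything to proving the uniform weighted bound
\[
\sup_{|z|=\varepsilon}\|U_z(\vec f)\|_{L^p(v_{\vec w})}\le C\prod_{j=1}^N\|f_j\|_{L^{p_j}(w_j)}
\]
for some admissible $\varepsilon>0$ depending only on $\vec P$, $[\vec w]_{A_{\vec P/r_0}}$, and $\|b_i\|_{BMO}$. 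Writing $z=\varepsilon e^{i\theta}$, $\tau:=\varepsilon\cos\theta$, introducing the perturbed weight tuple $\tilde w_j:=w_j$ for $j\ne i$ and $\tilde w_i:=w_ie^{p_i\tau b_i}$, and substituting $g_i:=e^{-zb_i}f_i$, one checks directly that $v_{\vec{\tilde w}}=e^{p\tau b_i}v_{\vec w}$ and that the required inequality is literally Theorem~\ref{thm:main} applied to $(g_1,\ldots,g_N)$ with the weight tuple $\vec{\tilde w}$---provided $\vec{\tilde w}\in A_{\vec P/r_0}$ uniformly in $|\tau|\le\varepsilon$.

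The main obstacle is therefore the following \emph{stability lemma}: if $\vec w\in A_{\vec P/r_0}$ and $b\in BMO$, then $(w_1,\ldots,w_ie^{\alpha b},\ldots,w_N)$ still belongs to $A_{\vec P/r_0}$, with uniformly controlled characteristic, whenever $|\alpha|$ is small enough in terms of $[\vec w]_{A_{\vec P/r_0}}$ and $\|b\|_{BMO}$. To prove it I would invoke the Lerner--Ombrosi--P\'erez--Torres--Trujillo-Gonz\'alez characterization that $\vec w\in A_{\vec P/r_0}$ is equivalent to the two \emph{scalar} conditions $v_{\vec w}\in A_{Np/r_0}$ and $w_j^{1-(p_j/r_0)'}\in A_{N(p_j/r_0)'}$ for each $j$ (both exponents exceed $1$ under our hypotheses $p>n/s$ and $p_j>r_0$). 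The classical fact that an $A_q$ weight remains in $A_q$ after multiplication by $e^{\alpha b}$ for $b\in BMO$ and $|\alpha|$ small---a consequence of the John--Nirenberg inequality together with the openness of $A_q$---then propagates through both scalar conditions and yields the required multilinear stability. Once this is in hand, combining it with the Cauchy representation and Theorem~\ref{thm:main} and summing over $i=1,\ldots,N$ produces \eqref{eq:commu}.
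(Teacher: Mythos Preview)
Your approach is correct and takes a genuinely different route from the paper. The paper proves Theorem~\ref{thm:main1} via a pointwise sharp-maximal estimate (Lemma~\ref{lm:main1}),
\[
M_\delta^\sharp(T_{m;\vec b}(\vec f))\lesssim \|\vec b\|_{BMO^N}\bigl(M_\epsilon(T_m(\vec f))+\mathcal{M}_{q_0}(\vec f)\bigr),
\]
obtained by rerunning the kernel decomposition of Lemma~\ref{lm:main} with an extra $(b-b_{Q^*})$ factor; one then applies Fefferman--Stein (Proposition~\ref{prop:p1}), Lemma~\ref{lm:main} again for the $M_\epsilon(T_m(\vec f))$ term, and the openness $\vec w\in A_{\vec P/q_0}$ from Proposition~\ref{prop:loptg} together with Proposition~\ref{prop:bd}. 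Your Coifman--Rochberg--Weiss conjugation instead uses Theorem~\ref{thm:main} as a black box, at the price of the multiple-weight stability lemma (which, as you say, reduces via the characterization in Proposition~2.3 to the scalar fact that $A_q$ is stable under multiplication by $e^{\gamma b}$ for small $|\gamma|$). Two technical points deserve more care in your write-up. First, when $p<1$---which the hypotheses permit---Minkowski's integral inequality is unavailable; the clean fix is to use subharmonicity of $|W_z|^p$ for the holomorphic function $W_z:=(U_z-U_0)/z$, giving $|T_{m;\vec b}^i(\vec f)(x)|^p=|W_0(x)|^p\le(2\pi)^{-1}\int_0^{2\pi}|W_{\varepsilon e^{i\theta}}(x)|^p\,d\theta$, after which Fubini applies directly. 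Second, you implicitly need the constant in Theorem~\ref{thm:main} to depend on $\vec w$ only through $[\vec w]_{A_{\vec P/r_0}}$, so that it is uniform over the perturbed family $\vec{\tilde w}$; this is true (trace the dependence through Propositions~\ref{prop:p1}, \ref{prop:bd}, \ref{prop:loptg} and Lemma~\ref{lm:main}) but should be said. What your route buys is modularity---no repetition of the kernel estimates, and an immediate extension to iterated commutators by differentiating $U_z$ to higher order---while the paper's sharp-maximal argument avoids complex analysis and weight perturbation entirely and yields the pointwise inequality of Lemma~\ref{lm:main1} as a byproduct.
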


\begin{Corollary}\label{cor:c2}
Under the hypotheses of Corollary~\ref{cor:c1}. If moreover  $\vec{b}\in BMO^N$, then $(\ref{eq:commu})$ holds.
\end{Corollary}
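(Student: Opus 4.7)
The plan is to mirror the duality argument behind Corollary~\ref{cor:c1}, lifted to the commutator level. Let $T_m^{*\beta}$ denote the $\beta$-th transpose of $T_m$ (with respect to the real pairing), characterized by
\[
\int T_m(\vec{f})\,g = \int T_m^{*\beta}(\vec{F})\,f_\beta,
\]
with $\vec{F}=(f_1,\ldots,f_{\beta-1},g,f_{\beta+1},\ldots,f_N)$. Under the hypotheses of Corollary~\ref{cor:c1}, the weight/exponent data obtained by swapping position $\beta$ with the output---namely $\vec{\tilde{P}}$ together with weights $(w_1,\ldots,v_{\vec{w}}^{1-p'},\ldots,w_N)$ and target $L^{p_\beta'}(w_\beta^{1-p_\beta'})$---satisfy precisely the assumptions needed for Theorem~\ref{thm:main1} to apply to $T_m^{*\beta}$, provided its symbol $m^{*\beta}$ also satisfies (\ref{eq:e3}). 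Since $m^{*\beta}$ is the composition of $m$ with the invertible linear change of variables that replaces $\xi_\beta$ by $-\sum_j\xi_j$ (and leaves the other variables fixed), this invariance follows by a routine change of variable in the localized $H^s$-norm.

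Writing $[T,b]_k(\vec{h}):=bT(\vec{h})-T(h_1,\ldots,bh_k,\ldots,h_N)$ for the commutator of a multilinear operator $T$ with a scalar function $b$ at the $k$-th slot, a direct manipulation of the transpose identity yields
\[
\int T_{m;\vec{b}}^\beta(\vec{f})\,g=-\int [T_m^{*\beta},b_\beta]_\beta(\vec{F})\,f_\beta,
\]
and for $i\neq\beta$,
\[
\int T_{m;\vec{b}}^i(\vec{f})\,g=\int\Big\{[T_m^{*\beta},b_i]_i-[T_m^{*\beta},b_i]_\beta\Big\}(\vec{F})\,f_\beta.
\]
The $i\neq\beta$ identity follows by writing $T_m^{*\beta}(\ldots,b_iF_k,\ldots)=b_iT_m^{*\beta}(\vec{F})-[T_m^{*\beta},b_i]_k(\vec{F})$ at both $k=\beta$ and $k=i$ and subtracting. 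Thus the $(N+1)$-linear form of each $T_{m;\vec{b}}^i$ decomposes into at most two bilinear forms involving commutators of $T_m^{*\beta}$ with the single $BMO$ function $b_i$.

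Each such single-symbol commutator is a particular case of $T_{m^{*\beta};\vec{c}}^k$ obtained by taking $\vec{c}$ to have $b_i$ as its only nonzero entry (in slot $k$), and is therefore controlled by Theorem~\ref{thm:main1} applied to $T_m^{*\beta}$, with constant $\le C\|b_i\|_{BMO}\le C\|\vec{b}\|_{BMO^N}$. Duality in the weighted pairing converts these bilinear-form estimates back into the desired operator bound $T_{m;\vec{b}}^i\colon L^{p_1}(w_1)\times\cdots\times L^{p_N}(w_N)\to L^p(v_{\vec{w}})$; summing over $i=1,\ldots,N$ yields (\ref{eq:commu}). The only step that requires genuine care is the invariance of (\ref{eq:e3}) under the linear substitution producing $m^{*\beta}$---the same point already underlying Corollary~\ref{cor:c1}; everything else is bookkeeping of positions in the commutator identities.
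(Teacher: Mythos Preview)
Your proposal is correct and follows the same strategy as the paper: dualize at slot~$\beta$, observe that the transposed symbol $m^{*\beta}$ (the paper's $m_1$) still satisfies~(\ref{eq:e3}), and then apply Theorem~\ref{thm:main1} to $T_{m^{*\beta}}$ with the exponents $\vec{\tilde P}$ and the swapped weights. Your write-up is in fact more complete than the paper's: the paper reduces ``by linearity'' to $\vec b=(b,0,\dots,0)$ and records only the duality identity
\[
\int T_{m;b}(\vec f)\,g=-\int T_{m_1;b}(g,f_2,\dots,f_N)\,f_1,
\]
which is precisely your $i=\beta$ case, then finishes with ``in much the same way as in the proof of Corollary~\ref{cor:c1}.'' Your additional identity for $i\ne\beta$, expressing $\int T_{m;\vec b}^{\,i}(\vec f)\,g$ as a combination of $[T_m^{*\beta},b_i]_i$ and $[T_m^{*\beta},b_i]_\beta$, is exactly what is needed to make that reduction rigorous, and both terms are indeed single-slot commutators of $T_m^{*\beta}$ covered by Theorem~\ref{thm:main1}.
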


In the rest of this paper, we give proofs for the above results.
We write $A\lesssim B$ if $A\le CB$ for some positive constant $C$, depending on $N$, the dimension $n$, the Lebesgue exponents and possibly the weights. We
write $A\asymp B$ if $A\lesssim B$ and $B\lesssim A$.

\section{Proof of the Main Results}
We begin with the definition of the Hardy-Littlewood maximal function,
\[
  M(f)(x)=\sup_{Q\ni x}\frac{1}{|Q|}\int_Q |f(y)|dy.
\]
The sharp maximal function is defined by
\[
  M^\sharp(f)(x)=\sup_{Q\ni x}\inf_{c\in\bbR}\frac{1}{|Q|}\int_Q |f(y)-c|dy.
\]
For $\delta>0$, we also need the maximal functions 
\[
  M_\delta(f)=M(|f|^\delta)^{1/\delta}\quad\mbox{and}\quad M_\delta^\sharp (f)=M^\sharp(|f|^\delta)^{1/\delta}.
\]

We use the following form of a classical result by Fefferman and Stein \cite{FS}.
\begin{Proposition}\label{prop:p1}
Let $0<p,\delta<\infty$ and $w\in A_{\infty}$. Then there exists some constant $C_{n,p,\delta,w}$ such that
\[
  \int_{\bbR^n}(M_\delta f)(x)^pw(x)dx\le C_{n,p,\delta,w}\int_{\bbR^n}(M_\delta^\sharp f)(x)^pw(x)dx.
\]
\end{Proposition}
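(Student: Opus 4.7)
\textbf{Proof proposal for Proposition~\ref{prop:p1}.}
The plan is to reduce the stated $M_\delta$/$M^\sharp_\delta$ form to the classical Fefferman--Stein inequality and then prove that classical inequality by the good-$\lambda$ method.

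First I would observe that if we set $g=|f|^\delta$ and $q=p/\delta$, then $(M_\delta f)^p = (Mg)^q$ and $(M^\sharp_\delta f)^p = (M^\sharp g)^q$. So the claim is equivalent, for every $q>0$ and every nonnegative $g\in L^1_{\mathrm{loc}}$, to
\begin{equation*}
\int_{\bbR^n}(Mg)^q w \le C_{n,q,w}\int_{\bbR^n}(M^\sharp g)^q w,\qquad w\in A_\infty.
\end{equation*}
By a standard truncation $g\to g\chi^{}_{B(0,R)}$ followed by monotone convergence, I may assume that the left-hand side is finite.

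Next I would establish the good-$\lambda$ inequality: there exist $\epsilon>0$ (depending only on the $A_\infty$ constant of $w$) and an absolute constant $c>0$ such that, for every $\lambda>0$ and every $0<\gamma<1$,
\begin{equation*}
w\bigl(\{Mg>2\lambda,\,M^\sharp g\le \gamma\lambda\}\bigr)\le c\,\gamma^{\epsilon}\,w\bigl(\{Mg>\lambda\}\bigr).
\end{equation*}
The argument is the usual one: apply the Calder\'on--Zygmund decomposition to the open set $\Omega_\lambda=\{Mg>\lambda\}$, obtaining a Whitney/maximal family of disjoint dyadic cubes $\{Q_j\}$ whose union is $\Omega_\lambda$. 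On each $Q_j$ one shows, using that $Mg\le \max(M(g\chi^{}_{3Q_j}),C\lambda)$ on $Q_j$ together with Kolmogorov's weak-type $(1,1)$ bound for $M$, that whenever there is a point $x_0\in Q_j$ with $M^\sharp g(x_0)\le \gamma\lambda$ we have the Lebesgue measure estimate
\begin{equation*}
|\{x\in Q_j:\,Mg(x)>2\lambda\}|\le C\gamma\,|Q_j|.
\end{equation*}
The $A_\infty$ property of $w$ then converts this into the desired estimate $w(\cdot)\le c\gamma^\epsilon w(Q_j)$ for each $Q_j$, and summing over $j$ yields the good-$\lambda$ inequality.

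Finally, I would integrate: for any $q>0$,
\begin{equation*}
\int_{\bbR^n}(Mg)^q w = q\,2^q\int_0^\infty \lambda^{q-1}w(\{Mg>2\lambda\})\,d\lambda,
\end{equation*}
split $w(\{Mg>2\lambda\})\le w(\{Mg>2\lambda,M^\sharp g\le \gamma\lambda\})+w(\{M^\sharp g>\gamma\lambda\})$, apply the good-$\lambda$ bound to the first term, and choose $\gamma$ small enough that $cq2^q\gamma^\epsilon<1/2$ so the resulting $\int(Mg)^q w$ term on the right can be absorbed (this absorption step is the reason the truncation is needed). The remaining term is exactly a constant multiple of $\int(M^\sharp g)^q w$, finishing the proof. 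The main technical obstacle is the good-$\lambda$ inequality itself --- in particular the Lebesgue-measure estimate on each Whitney cube $Q_j$, where the oscillation control provided by $M^\sharp g(x_0)\le \gamma\lambda$ must be combined correctly with the stopping-time information $Mg\sim\lambda$ at the parent of $Q_j$.
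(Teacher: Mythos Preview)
The paper does not prove Proposition~\ref{prop:p1} at all: it simply quotes it as ``a classical result by Fefferman and Stein~\cite{FS}'' and moves on. So there is no paper-proof to compare against; your proposal supplies exactly the standard argument that the paper suppresses.

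Your outline is the textbook good-$\lambda$ proof and is essentially correct. One caveat worth flagging: the sentence ``by a standard truncation $g\to g\chi^{}_{B(0,R)}$ followed by monotone convergence, I may assume that the left-hand side is finite'' does not actually do the job. For a bounded compactly supported $g$ one has $Mg(x)\asymp |x|^{-n}$ at infinity, and for an arbitrary $A_\infty$ weight (say $w(x)=|x|^{\alpha}$ with $\alpha$ large) the integral $\int (Mg)^q w$ can still diverge when $q$ is small, so the absorption step fails. The classical formulation of the Fefferman--Stein inequality carries an a~priori finiteness hypothesis on the left-hand side (or, equivalently, one proves it first for dyadic maximal functions on a fixed cube and passes to a limit); the paper's statement omits this hypothesis, as is common, because in its applications $T_m(\vec f)$ is taken in suitable $L^p$ spaces so that $M_\delta(T_m(\vec f))\in L^p(v_{\vec w})$ can be arranged. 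If you want a self-contained proof, replace the naive truncation by the usual device of working with the local/dyadic sharp maximal inequality on expanding cubes, or simply add the hypothesis $\|M_\delta f\|_{L^p(w)}<\infty$ and note that this is what the paper uses.
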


For $\vec{f}=(f_1,\cdots,f_N)$ and $p\ge 1$, we define
\[
  \mathcal{M}_p(\vec{f})=\sup_{Q\ni x} \prod_{i=1}^N\bigg(\frac{1}{|Q|}\int_Q |f_i(y_i)|^p dy_i\bigg)^{1/p}.
\]
The following proposition gives a necessary and sufficient condition for the boundedness
of $\mathcal{M}_p$.
\begin{Proposition}\cite[Proposition 2.3]{BD}\label{prop:bd}
Let $p_0\ge 1$ and $p_i>p_0$ for all $i=1,\cdots, N$ and $1/{p_1}+\cdots+1/{p_N}=1/p$. Then the inequality
\[
   \|\mathcal{M}_{p_0}(\vec{f})\|_{L^p(v^{}_{\vec{w}})}\le C \prod_{i=1}^N \|f_i\|_{L^{p_i}(w_i)}
\]
holds if and only if $\vec{w}\in A_{\vec{P}/{p_0}}$, where $\vec{P}/{p_0}=(p_1/{p_0},\cdots, p_N/{p_0})$.
\end{Proposition}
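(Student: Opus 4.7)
The plan is to reduce the proposition, via a homogeneity scaling in the exponents, to the characterization of multi-weights for the multi-sublinear maximal operator $\mathcal{M}_1$ established by Lerner, Ombrosi, P\'erez, Torres and Trujillo-Gonz\'alez in \cite{LOPTG}. Their theorem states that for $\vec{Q}=(q_1,\ldots,q_N)$ with each $q_i>1$ and $1/q=\sum_i 1/q_i$, the operator $\mathcal{M}_1$ maps $\prod_i L^{q_i}(w_i)$ into $L^q(v_{\vec{w}})$ if and only if $\vec{w}\in A_{\vec{Q}}$. I would cite this result rather than reprove it.

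For the reduction I will set $g_i=|f_i|^{p_0}$, $q_i=p_i/p_0$, and $q=p/p_0$. Three routine identities do the work: pointwise $\mathcal{M}_{p_0}(\vec{f})^{p_0}=\mathcal{M}_1(\vec{g})$; the exponent relation $\sum_i 1/q_i=1/q$; and, because $q/q_i=p/p_i$, the target weight $\prod_i w_i^{q/q_i}$ coincides with $v_{\vec{w}}$. Raising the proposed inequality to the $p_0$-th power then transforms it exactly into the $\mathcal{M}_1$ estimate for the data $(\vec{g},\vec{Q},q)$. Since $p_i>p_0$ forces $q_i>1$, the hypotheses of \cite{LOPTG} are satisfied and sufficiency follows. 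Moreover $\vec{w}\in A_{\vec{P}/p_0}$ is by construction the same statement as $\vec{w}\in A_{\vec{Q}}$, so the two weight conditions match up.

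For necessity I will test the inequality on $f_i=\sigma_i^{1/p_0}\chi_Q$ with $\sigma_i=w_i^{1-q_i'}$ and an arbitrary cube $Q$. This yields the pointwise lower bound $\mathcal{M}_{p_0}(\vec{f})(x)\ge\prod_i\bigl(|Q|^{-1}\int_Q\sigma_i\bigr)^{1/p_0}$ for $x\in Q$, while the algebraic identity $\sigma_i^{p_i/p_0}w_i=\sigma_i$ gives $\|f_i\|_{L^{p_i}(w_i)}^{p_i}=\int_Q\sigma_i$. Plugging both into the inequality and using $\sum_i 1/p_i=1/p$ to track the powers of $|Q|$ produces exactly the $A_{\vec{P}/p_0}$ condition.

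The main obstacle is really inherited from \cite{LOPTG}: the sufficiency half of the $p_0=1$ characterization, whose proof relies on a multilinear Calder\'on--Zygmund decomposition and a Marcinkiewicz-style interpolation across the $N$ coordinates. With that result in hand, everything left on our side is bookkeeping, namely the scaling reduction of the second paragraph and the test-function computation of the third.
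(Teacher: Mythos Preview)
The paper does not supply its own proof of this proposition; it is quoted verbatim from \cite[Proposition~2.3]{BD} and used as a black box. Your reduction---substituting $g_i=|f_i|^{p_0}$, $q_i=p_i/p_0$, $q=p/p_0$, and noting that $v_{\vec w}$ and the $A_{\vec P/p_0}$ condition are invariant under this rescaling---is correct and is precisely the argument Bui and Duong give in \cite{BD}. Note also that your scaling reduction already yields both directions at once (since $f_i\mapsto|f_i|^{p_0}$ is a bijection on nonnegative measurable functions), so the separate test-function computation for necessity, while correct, is not needed.
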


Next we introduce two properties of   multiple weights.
\begin{Proposition}\cite[Theorem 3.6]{LOPTG}
Let $\vec{w}=(w_1,\cdots,w_N)$ and $1\le p_1$, $\cdots$, $p_N<\infty$. Then $\vec{w}\in A_{\vec{P}}$ if and only if
\begin{equation}
\begin{cases}
w_i^{1-p_i'}\in A_{Np_i'}, & i=1,\cdots N, \\
v_{\vec{w}}\in A_{Np},
\end{cases}
\end{equation}
where the condition $w_i^{1-p_i'}\in A_{Np_i'}$ in the case $p_i=1$ is understood as $w_i^{1/N}\in A_1$.
\end{Proposition}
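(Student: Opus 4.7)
The plan is to prove both directions of the equivalence by carefully chosen applications of H\"{o}lder's inequality with explicit exponent bookkeeping.

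For the forward direction, assume $\vec{w}\in A_{\vec{P}}$ and fix a cube $Q$. To obtain $v_{\vec{w}}\in A_{Np}$, I would bound
\[
  \Big(\frac{1}{|Q|}\int_Q v_{\vec{w}}\Big)\Big(\frac{1}{|Q|}\int_Q v_{\vec{w}}^{-1/(Np-1)}\Big)^{Np-1}
\]
by writing $v_{\vec{w}}^{-1/(Np-1)}=\prod_{i=1}^N w_i^{-p/(p_i(Np-1))}$ and applying H\"{o}lder with exponents $q_i=p_i'(Np-1)/p$. The two algebraic facts that make this work are $\sum_i 1/q_i = \frac{p}{Np-1}\sum_i \frac{1}{p_i'}=\frac{p(N-1/p)}{Np-1}=1$ (using $\sum 1/p_i'=N-1/p$) and $q_i\cdot p/(p_i(Np-1))=p_i'/p_i=p_i'-1$, so H\"{o}lder gives exactly
\[
  \Big(\frac{1}{|Q|}\int_Q v_{\vec{w}}^{-1/(Np-1)}\Big)^{Np-1} \le \prod_i\Big(\frac{1}{|Q|}\int_Q w_i^{1-p_i'}\Big)^{p/p_i'},
\]
and multiplying by the mean of $v_{\vec{w}}$ yields the $p$-th power of the $A_{\vec{P}}$ constant. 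For $w_i^{1-p_i'}\in A_{Np_i'}$, I would use the identity $w_i=v_{\vec{w}}^{p_i/p}\prod_{j\ne i}w_j^{-p_i/p_j}$ (immediate from the definition of $v_{\vec{w}}$), express $w_i^{(p_i'-1)/(Np_i'-1)}$ as a product involving $v_{\vec{w}}$ and the $w_j$ for $j\ne i$, and run the same H\"{o}lder bookkeeping with the analogous dual exponents.

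The reverse direction is where I expect the main obstacle. Assuming $v_{\vec{w}}\in A_{Np}$ and $\sigma_i:=w_i^{1-p_i'}\in A_{Np_i'}$, H\"{o}lder alone does not suffice: applied to $v_{\vec{w}}=\prod_i\sigma_i^{-p/p_i'}$ it gives an upper bound on $\frac{1}{|Q|}\int_Q v_{\vec{w}}$ in terms of $\prod_i(\frac{1}{|Q|}\int_Q\sigma_i^{-r_ip/p_i'})^{1/r_i}$, and the exponents $r_i=p_i'/(p(Np_i'-1))$ needed to match $-1/(Np_i'-1)$ inside the $A_{Np_i'}$ characteristic sum to $\sum_i 1/r_i = N^2p-Np+1>1$, so the natural H\"{o}lder pairing has slack. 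My plan to close this is to exploit the reverse H\"{o}lder inequality enjoyed by each $\sigma_i$ (since $A_{Np_i'}\subset A_\infty$), which yields a small $\varepsilon_i>0$ with $\sigma_i\in L^{1+\varepsilon_i}_{\mathrm{loc}}$, together with the $A_{Np}$ condition on $v_{\vec{w}}$. The extra integrability gives enough room to rescale the exponents to $\sum 1/r_i=1$ while keeping the individual integrals controlled by $(\frac{1}{|Q|}\int_Q\sigma_i)^{-p/p_i'}$ up to constants, after which the $A_{Np}$ hypothesis on $v_{\vec{w}}$ absorbs the leftover factor and produces the desired $A_{\vec{P}}$ bound.
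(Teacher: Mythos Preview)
This proposition is not proved in the paper at all: it is stated as \cite[Theorem~3.6]{LOPTG} and used as a black box, so there is no in-paper argument to compare your proposal against. The relevant benchmark is the original proof in \cite{LOPTG}.

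Your forward direction is correct and is exactly the H\"{o}lder bookkeeping carried out in \cite{LOPTG}. The exponent identities you record check out, and the companion step for $\sigma_i:=w_i^{1-p_i'}\in A_{Np_i'}$ works as you indicate: one writes
\[
\sigma_i^{-1/(Np_i'-1)}=v_{\vec w}^{\,p_i'/(p(Np_i'-1))}\prod_{j\ne i}\sigma_j^{\,p_i'/(p_j'(Np_i'-1))},
\]
applies H\"{o}lder with exponents $p(Np_i'-1)/p_i'$ and $p_j'(Np_i'-1)/p_i'$ (whose reciprocals sum to~$1$), and concludes that the $A_{Np_i'}$ supremum for $\sigma_i$ is dominated by a power of the $A_{\vec P}$ supremum for $\vec w$.

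The reverse direction in your plan has a genuine gap. You correctly compute that forcing the H\"{o}lder integrands to equal $\sigma_i^{-1/(Np_i'-1)}$ requires exponents $r_i$ with $\sum_i 1/r_i = Np(N-1)+1$, overshooting $1$ by the \emph{fixed} amount $Np(N-1)$. But reverse H\"{o}lder on $\sigma_i\in A_\infty$ (equivalently on the dual weight $\sigma_i^{-1/(Np_i'-1)}\in A_{(Np_i')'}$) only enlarges the controllable negative exponent of $\sigma_i$ from $1/(Np_i'-1)$ to $(1+\varepsilon_i)/(Np_i'-1)$ for some small weight-dependent $\varepsilon_i>0$; carrying this through shrinks $\sum_i 1/r_i$ at best to $(Np(N-1)+1)/(1+\varepsilon)$, which remains bounded away from $1$. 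An $O(\varepsilon)$ self-improvement cannot close an $O(1)$ exponent gap, and the final appeal to the $A_{Np}$ hypothesis on $v_{\vec w}$ to ``absorb the leftover factor'' is too vague to repair this: peeling off any power $v_{\vec w}^\theta$ and handling it via $A_{Np}$ leaves the identical structural deficit in the remaining H\"{o}lder step on $v_{\vec w}^{1-\theta}=\prod_i\sigma_i^{-(1-\theta)p/p_i'}$. You should consult \cite{LOPTG} directly for the correct organization of the reverse implication, which exploits the pointwise identity $v_{\vec w}^{1/p}\prod_i\sigma_i^{1/p_i'}\equiv 1$ together with all $N+1$ scalar Muckenhoupt hypotheses simultaneously rather than attempting to bound $\frac{1}{|Q|}\int_Q v_{\vec w}$ in a single H\"{o}lder stroke against the $\sigma_i$.
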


The following result appears in \cite[Lemma 6.1]{LOPTG}. For our purpose,
we make a slight change.
\begin{Proposition}\label{prop:loptg}
Assume that $\vec{w}=(w_1,\cdots,w_N)$ satisfies the $A_{\vec{P}}$ condition, where $\vec{P}=(p_1,\cdots, p_N)$ with
$1<p_1,\cdots, p_N<\infty$. Let
$Nn/2<s\le Nn$. Then there exists a  constant
$1<r<\min\{p_1$, $\cdots$, $p_N$, $s/(s-1)$, $2s/(Nn)\}$ such that $\vec{w}\in A_{\vec{P}/r}$.
\end{Proposition}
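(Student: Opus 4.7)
The plan is to invoke the openness statement in the original Lemma~6.1 of \cite{LOPTG}, which furnishes some exponent $r_0>1$ with $\vec{w}\in A_{\vec{P}/r_0}$, and then shrink $r$ toward $1$ via a monotonicity argument based on the multilinear maximal operator. Since each of $p_1,\ldots,p_N$, $s/(s-1)$ and $2s/(Nn)$ is strictly greater than $1$ (the last two because $Nn/2<s$ forces $s>1$ and $2s>Nn$), it will suffice to produce an exponent $r\in(1,r_0]$ that is arbitrarily close to $1$.

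First I would apply Lemma~6.1 of \cite{LOPTG} to obtain some $r_0>1$ with $\vec{w}\in A_{\vec{P}/r_0}$. By replacing $r_0$ with a smaller number if necessary, I may assume $r_0<\min_i p_i$; this is harmless because the cited openness lemma in fact delivers an $r_0$ that can be taken arbitrarily close to $1$ (its proof ultimately rests on reverse H\"older exponents for $v^{}_{\vec{w}}$ and for $w_i^{1-p_i'}$, which are themselves close to $1$).

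Next I would use Proposition~\ref{prop:bd} to translate $\vec{w}\in A_{\vec{P}/r_0}$ into the boundedness of the multilinear maximal function $\mathcal{M}_{r_0}$ from $L^{p_1}(w_1)\times\cdots\times L^{p_N}(w_N)$ into $L^p(v^{}_{\vec{w}})$. For any $r$ with $1<r\le r_0$, Jensen's inequality applied inside each factor of the product gives the pointwise bound $\mathcal{M}_r(\vec{f})(x)\le\mathcal{M}_{r_0}(\vec{f})(x)$, so $\mathcal{M}_r$ is bounded between the same spaces; applying Proposition~\ref{prop:bd} in the converse direction (which is legitimate since $r<\min_i p_i$) yields $\vec{w}\in A_{\vec{P}/r}$.

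To finish, I would pick
\[
r\in\bigl(1,\;\min\{r_0,p_1,\ldots,p_N,s/(s-1),2s/(Nn)\}\bigr),
\]
which is a non-empty interval by the openness of the above bounds around~$1$. The only mild obstacle is the bookkeeping in the first step, namely ensuring that the initial $r_0$ from the LOPTG lemma lies below $\min_i p_i$ so that Proposition~\ref{prop:bd} is applicable; this is absorbed by the openness property of $A_{\vec{P}}$ itself, so no genuine new estimate is required.
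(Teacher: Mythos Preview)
Your argument is correct and ultimately rests on the same observation as the paper's: the exponent produced by Lemma~6.1 of \cite{LOPTG} can be taken arbitrarily close to~$1$ because the reverse H\"older exponents~$t_i$ can (by H\"older's inequality). The paper simply re-sketches that part of the LOPTG argument and stops.

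Your detour through Proposition~\ref{prop:bd} and the pointwise inequality $\mathcal{M}_r\le\mathcal{M}_{r_0}$ is a legitimate way to establish the nesting $A_{\vec{P}/r_0}\subset A_{\vec{P}/r}$ for $1<r<r_0$, but it is redundant here: you yourself need to invoke the ``close to~$1$'' feature of the LOPTG construction just to ensure $r_0<\min_i p_i$ so that Proposition~\ref{prop:bd} applies. Once that concession is made, the same reasoning lets you place $r_0$ below $s/(s-1)$ and $2s/(Nn)$ directly, and the maximal-function step never needs to enter. So the two proofs are effectively the same; yours just carries an extra (harmless) monotonicity argument on top.
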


\begin{proof}
Since we need an accurate estimate of $r$, we sketch the proof given in  \cite{LOPTG}.
Using the reverse H\"{o}lder inequality, it was shown in  the proof of  \cite[Lemma 6.1]{LOPTG} that there exist constants $c_i, t_i>1$ such that
\[
   \bigg(\frac{1}{|Q|}\int_Q w_i^{-\frac{t_i}{p_i-1}}\bigg)^{1/{t_i}}\le
   \frac{c_i}{|Q|}\int_Q w_i^{-\frac{1}{p_i-1}}
\]
for all $i=1,\cdots, N$.
Let $r_i$ be selected such that
\[
  \frac{t_i}{p_i-1}=\frac{1}{\frac{p_i}{r_i}-1}.
\]
Then $r=\min\{r_1,\cdots, r_N\}$ satisfies $\vec{w}\in A_{\vec{P}/r}$.
By H\"{o}lder's inequality, we can choose $t_i$, and therefore $r$, arbitrarily close to $1$. Since both $s/(s-1)$ and $2s/(Nn)$ are greater than $1$,
we get the desired conclusion.
\end{proof}

The boundedness of multilinear Fourier multipliers
was proved in \cite{CM,GS,GT,T}. Here we cite a version in \cite{GS}.

\begin{Proposition}\cite[Theorem 1.1]{GS}\label{prop:gs}
Suppose that $m\in L^\infty(\bbR^{Nn})$ satisfies $(\ref{eq:e3})$.
Let $Nn/2<s\le Nn$, $Nn/s< p_1, \cdots, p_N<\infty$ and $1/{p_1}+\cdots+1/{p_N}=1/p$.  Then
$T_m$ is bounded from $L^{p_1}(\bbR^n)\times \cdots \times L^{p_N}(\bbR^n)$ to $L^p(\bbR^n)$.
\end{Proposition}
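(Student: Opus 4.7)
I would follow the standard Fourier-series argument of Tomita \cite{T}, refined by Grafakos--Si \cite{GS} to reach the range $p\le 1$. Pick a radial $\Psi\in C_c^\infty(\bbR^{Nn})$ supported in $\{1/2\le|\xi|\le 2\}$ with $\sum_{k\in\bbZ}\Psi(\xi/2^k)=1$ off the origin, set $m_k(\xi)=m(\xi)\Psi(\xi/2^k)$, and decompose $T_m=\sum_k T_{m_k}$. By \eqref{eq:e3} (and the fact that multiplication by a smooth compactly supported bump is bounded on $H^s$), $\sup_k\|m(2^k\,\cdot)\Psi\|_{H^s}<\infty$. Expand each rescaled piece as a multiple Fourier series on a fixed cube containing $\supp\Psi$,
\[
  m(2^k\xi)\Psi(\xi)=\sum_{\nu\in\bbZ^{Nn}}c_\nu^k\, e^{2\pi i\nu\cdot\xi/R},
\]
so that Plancherel gives $\sum_\nu(1+|\nu|)^{2s}|c_\nu^k|^2\lesssim 1$ uniformly in $k$.

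Substituting back and undoing the dilation, each index $\nu=(\nu_1,\dots,\nu_N)\in(\bbZ^n)^N$ contributes a product of Littlewood--Paley pieces of $f_i$ translated by $-2^{-k}\nu_i/R$. The rapid decay of the Littlewood--Paley kernel yields the pointwise maximal-function domination
\[
  |(\Delta_k f_i)(x-2^{-k}\nu_i/R)|\lesssim(1+|\nu_i|)^{n/r}M_r f_i(x)
\]
for any $r>0$. Combined with H\"older's inequality ($1/p=\sum_{i=1}^N 1/{p_i}$), this gives
\[
  |T_{m_k}\vec f(x)|\lesssim\bigg(\sum_\nu|c_\nu^k|\prod_{i=1}^N(1+|\nu_i|)^{n/r}\bigg)\prod_{i=1}^N M_r f_i(x).
\]
A Cauchy--Schwarz in $\nu$ against the Sobolev weight makes the $\nu$-series converge precisely when $s>Nn/2$ and $r>Nn/s$; picking $r\in(Nn/s,\min_i p_i)$---possible by hypothesis---the $L^{p_i/r}$-boundedness of $M$ delivers the desired bound for each fixed $k$.

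The main obstacle is summing over $k\in\bbZ$, where a uniform-in-$k$ supremum would lose a divergent factor. I would close the argument by upgrading the per-scale pointwise estimate to a multilinear Littlewood--Paley square-function inequality: further decompose $\Psi(\xi/2^k)$ into a tensor product of smooth bumps in the individual frequency variables $\xi_i$, which restores $\ell^2$-orthogonality on each slot. The Fefferman--Stein vector-valued maximal inequality---applicable because $1<p_i<\infty$---then controls the resulting square function on each $f_i$ by $\|f_i\|_{L^{p_i}}$, yielding $\|T_m\vec f\|_{L^p}\lesssim\prod_i\|f_i\|_{L^{p_i}}$.
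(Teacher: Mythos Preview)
The paper does not prove this proposition; it is quoted from \cite[Theorem~1.1]{GS} and invoked as a black box in the proof of Lemma~\ref{lm:main}. So there is no in-paper argument to compare against---only the original proofs in \cite{T,GS}, whose broad outline (dyadic decomposition of $m$, Fourier-series expansion of each rescaled piece, pointwise domination by $M_r$ with $r>Nn/s$, Cauchy--Schwarz against the Sobolev weight) your plan does follow correctly.

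There is, however, a genuine gap in your handling of the sum over $k$. Your proposed fix is to ``further decompose $\Psi(\xi/2^k)$ into a tensor product of smooth bumps in the individual frequency variables $\xi_i$'' so as to recover $\ell^2$-orthogonality in every slot. This cannot work as stated: the annulus $\{\tfrac12\le|\xi|\le2\}\subset\bbR^{Nn}$ contains points with some $\xi_i=0$, so any smooth partition of it by product cutoffs must allow some factors to be balls centred at the origin rather than annuli, and on those factors the pieces at different scales $k$ overlap with no orthogonality whatsoever. What the proofs in \cite{T,GS} actually exploit is that on $\supp\Psi(\cdot/2^k)$ at least \emph{one} coordinate satisfies $|\xi_{i_0}|\gtrsim 2^k$; after a finite splitting according to which coordinate dominates, one inserts a genuine Littlewood--Paley projection on that single $f_{i_0}$ and runs the square-function and Fefferman--Stein vector-valued maximal arguments on that slot alone, controlling the remaining factors by scalar maximal functions. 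A related smaller point: since $\Psi$ does not localise each $\xi_i$ separately, your pieces ``$\Delta_k f_i$'' only appear after one inserts an auxiliary product cutoff $\widetilde\Psi=\prod_i\widetilde\psi_i$ identically $1$ on $\supp\Psi$; this is routine but should be made explicit before the translated-kernel estimate.
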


The following lemma is the key to our main results.

\begin{Lemma}\label{lm:main}
Under the hypotheses of Theorem~\ref{thm:main}, if moreover $0<\delta<p_0/N$, where $p_0=rr_0$ and $r$ is the same as that appears in Proposition~\ref{prop:loptg}. Then for all $\vec{f}$ in product
of $L^{q_i}(\bbR^n)$ spaces with $p_0\le q_1,\cdots, q_N<\infty$,
\begin{equation}\label{eq:e5}
M_\delta^\sharp(T_m(\vec{f}))\le C\mathcal{M}_{p_0}(\vec{f}).
\end{equation}
\end{Lemma}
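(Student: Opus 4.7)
I would follow the standard decomposition scheme for sharp function estimates of multilinear singular integrals, adapted to the Sobolev setting. Fix $x\in\bbR^n$ and a cube $Q\ni x$ with center $x_Q$ and side length $\ell$; set $Q^*=8\sqrt n\,Q$ and split each input as $f_i=f_i^0+f_i^\infty$ with $f_i^0=f_i\chi_{Q^*}$. Multilinearity expands $T_m(\vec f)$ into $2^N$ pieces indexed by $\vec\alpha\in\{0,\infty\}^N$, and I pick the subtraction constant
\[
c=\sum_{\vec\alpha\ne\vec 0}T_m(f_1^{\alpha_1},\ldots,f_N^{\alpha_N})(x_Q).
\]
Since $r<2s/(Nn)$, one has $p_0=rr_0<2$ and hence $\delta<p_0/N\le 1$, so the elementary inequality $\bigl||a|^\delta-|b|^\delta\bigr|\le|a-b|^\delta$ reduces matters to controlling, by $\mathcal{M}_{p_0}(\vec f)(x)$, the $L^\delta(Q,dz/|Q|)$-norm of $T_m(f_1^0,\ldots,f_N^0)(z)$ together with that of $T_m(f_1^{\alpha_1},\ldots,f_N^{\alpha_N})(z)-T_m(f_1^{\alpha_1},\ldots,f_N^{\alpha_N})(x_Q)$ for each $\vec\alpha\ne\vec 0$.

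\textbf{Local term.} For the all-local piece, Proposition~\ref{prop:gs} applies with equal exponents $(p_0,\ldots,p_0)$, valid because $p_0>r_0=Nn/s$, yielding the strong-type bound $T_m\colon L^{p_0}\times\cdots\times L^{p_0}\to L^{p_0/N}$. Since $\delta<p_0/N$, Kolmogorov's inequality then gives
\[
\left(\frac1{|Q|}\int_Q |T_m(f_1^0,\ldots,f_N^0)(z)|^\delta dz\right)^{1/\delta}\lesssim |Q|^{-N/p_0}\prod_{i=1}^N\|f_i\chi_{Q^*}\|_{L^{p_0}}\lesssim \prod_{i=1}^N\left(\frac1{|Q^*|}\int_{Q^*}|f_i|^{p_0}\right)^{1/p_0}\le \mathcal{M}_{p_0}(\vec f)(x).
\]

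\textbf{Non-local terms.} For $\vec\alpha\ne\vec 0$ at least one $f_i^\infty$ is supported outside $Q^*$, so I exploit kernel smoothness in the target variable via a Littlewood--Paley decomposition $m=\sum_{j\in\bbZ}m_j$ with $\supp m_j\subset\{2^{j-1}\le|\xi|\le 2^{j+1}\}$. From (\ref{eq:e3}) and Plancherel, the kernel $K_j$ of $T_{m_j}$ obeys $L^2$-decay on annular shells of radius $R$ at rate $2^{jNn/2}(1+2^jR)^{-s}$, together with a Lipschitz gain of size $2^j|z-x_Q|$ in the target variable. Partitioning each complement $\{y_i\notin Q^*\}$ into dyadic annuli $A_{k_i}=2^{k_i+1}Q^*\setminus 2^{k_i}Q^*$ and applying H\"older with exponent $p_0$ on the $f_i$'s and $p_0'$ on the kernel factor reduces the $L^\delta$-average over $Q$ of each non-local difference to a double geometric sum in $j$ and $\vec k$, multiplied by a product of local $L^{p_0}$-averages of the $f_i$. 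Summing first in $j$ (balanced at $2^j\asymp R_{\vec k}^{-1}$, with $R_{\vec k}\asymp 2^{\max_i k_i}\ell$) and then in $\vec k$ yields the desired bound by $\mathcal{M}_{p_0}(\vec f)(x)$.

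\textbf{Main obstacle.} The delicate part is the kernel analysis just described. Because (\ref{eq:e3}) is only an $L^2$-Sobolev condition, pointwise derivative bounds for $K_j$ are unavailable and kernel decay must be extracted through Plancherel and Cauchy--Schwarz on annuli. The convergence of the resulting double series rests entirely on the fine calibration of $r$ from Proposition~\ref{prop:loptg}: the conditions $r<s/(s-1)$ and $r<2s/(Nn)$ are precisely what make the geometric sums in $j$ and in $\vec k$ simultaneously summable, and are the reason those particular upper bounds on $r$ appear in that proposition.
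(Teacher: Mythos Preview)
Your plan matches the paper's proof in all essentials: the same good/bad decomposition with subtraction constant, Kolmogorov together with Proposition~\ref{prop:gs} at exponent $p_0$ for the all-local piece, the Littlewood--Paley resolution $m=\sum_j m_j$, H\"older at exponent $p_0'$ against the kernel on dyadic shells, and a two-regime treatment of the $j$-sum (direct bound versus the mean-value/Lipschitz gain). You also correctly identify that the constraints on $r$ in Proposition~\ref{prop:loptg} are precisely what make the geometric series converge.

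There is one concrete slip. The Lipschitz gain in the target variable is $2^j|z-x_Q|\asymp 2^j\ell(Q)$, so the crossover with the direct kernel estimate occurs at $2^j\asymp\ell(Q)^{-1}$, \emph{not} at $2^j\asymp R_{\vec k}^{-1}$ as you write. This matters: if you split the $j$-sum at $R_{\vec k}^{-1}$ and use only the direct bound $J_{j,k}\lesssim R_k^{-s}2^{j(Nn/p_0-s)}$ for $2^j\gtrsim R_k^{-1}$, that portion contributes $\asymp R_k^{-Nn/p_0}$, which after multiplication by the $L^{p_0}$ mass $\asymp R_k^{Nn/p_0}\mathcal M_{p_0}(\vec f)(x)$ on the $k$-th shell leaves \emph{no} decay in $k$, and the shell sum diverges. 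The paper splits instead at $2^j\asymp\ell(Q)^{-1}$ (equivalently $j=l$ with $2^{-l}\asymp\ell(Q)$); both regimes then yield $\sum_j J_{j,k}\lesssim 2^{-ks}\ell(Q)^{-Nn/p_0}$, and the resulting $k$-sum $\sum_k 2^{-k(s-Nn/p_0)}$ converges because $s>Nn/p_0$ (i.e.\ $r>1$), while the small-$j$ sum converges because $Nn/p_0>s-1$ (i.e.\ $r<s/(s-1)$). The third constraint $r<2s/(Nn)$ serves a different purpose: it forces $p_0\le 2$, so that $\delta<p_0/N\le 1$ and so that the $L^{p_0'}$ kernel bound via Plancherel (with $p_0'\ge 2$) is available.
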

\begin{proof}
By Proposition~\ref{prop:loptg}, $rr_0\le 2$. Consequently, $p_0/N = rr_0/N\le 1$.
Fix a point $x$ and a cube $Q$ such that $x\in Q$. It suffices to prove
\begin{equation}\label{eq:e6}
\bigg( \frac{1}{|Q|}\int_Q |T_m(\vec{f})(z)-c_Q|^\delta dz\bigg)^{1/\delta}\le C\mathcal{M}_{p_0}(\vec{f})(x)
\end{equation}
for some constant $c_Q$ to be determined later
since $||\alpha|^\delta-|\beta|^\delta|\le |\alpha-\beta|^\delta$ for $0<\delta<1$.
Following the method used in \cite{LOPTG}, let $f_i=f_i^0+f_i^\infty$, where $f_i^0=f_i\chi^{}_{Q^*}$ for all
$i=1,\cdots,N$, and $Q^*=4\sqrt{n}Q$. Then
\begin{eqnarray*}
\prod_{i=1}^Nf_i(y_i)&=&\prod_{i=1}^N(f_i^0(y_i)+f_i^\infty(y_i))\\
&=&\sum_{\alpha_1,\cdots,\alpha_N\in\{0,\infty\}}f_1^{\alpha_1}(y_1)\cdots f_N^{\alpha_N}(y_N)\\
&=&\prod_{i=1}^N f_i^0(y_i)+\sum_{\alpha_1,\cdots,\alpha_N\in \mathcal{I}} f_1^{\alpha_1}(y_1)\cdots f_N^{\alpha_N}(y_N),
\end{eqnarray*}
where $\mathcal{I}:=\{\alpha_1,\cdots,\alpha_N: \mbox{there is at least one $\alpha_i\neq 0$}\}$.
Write then
\begin{equation}\label{eq:e7}
T_m(\vec{f})(z)=T_m(\vec{f^0})(z)+\sum_{\alpha_1,\cdots,\alpha_N\in\mathcal{I}}
T_m(f_1^{\alpha_1},\cdots,f_N^{\alpha_N})(z)
\end{equation}
Applying  Kolmogorov's inequality to the first term
\[
  T_m(\vec{f^0})(z)=T_m(f_1^0,\cdots,f_N^0)(z),
\]
we have
\begin{eqnarray*}
\bigg(\frac{1}{|Q|}\int_Q |T_m(\vec{f^0})(z)|^\delta dz\bigg)^{1/\delta}
&\lesssim& \|T_m(\vec{f^0})(z)\|_{L^{p_0/N,\infty}(Q,dx/{|Q|})}\\
&\lesssim& \prod_{i=1}^N\bigg(\frac{1}{|Q^*|}\int_{Q^*}|f_i(y_i)|^{p_0}dy_i\bigg)^{1/{p_0}}\\
&\le& \mathcal{M}_{p_0}(\vec{f})(x),
\end{eqnarray*}
since $p_0>Nn/s$ and $T_m$ is bounded from $L^{p_0}\times \cdots\times L^{p_0}$ to $L^{p_0/N}$, thanks to Proposition~\ref{prop:gs}.

In order to study the other terms in $(\ref{eq:e7})$, we set now
\[
  c=\sum_{\alpha_1,\cdots,\alpha_N\in\mathcal{I}}
T_m(f_1^{\alpha_1},\cdots,f_N^{\alpha_N})(x),
\]
and we will show that, for any $z\in Q$, we also get an estimate of the form
\begin{eqnarray}
\sum_{\alpha_1,\cdots,\alpha_N\in\mathcal{I}}
|T_m(f_1^{\alpha_1},\cdots,f_N^{\alpha_N})(z)-T_m(f_1^{\alpha_1},\cdots,f_N^{\alpha_N})(x)|\le C\mathcal{M}_{p_0}(\vec{f})(x).   \label{eq:e8}
\end{eqnarray}
Consider first the case when $\alpha_1=\cdots=\alpha_N=\infty$ and define
\[
  T_m(\vec{f^\infty})(z)=T_m(f_1^\infty,\cdots,f_N^\infty)(z).
\]
Let $m_j=m(\cdot)\psi(\cdot/{2^j})$, where $\psi\in\mathscr{S}(\bbR^{Nn})$ with $\supp \psi\subset\{\xi\in\bbR^{Nn}: 1/2\le|\xi|\le2\}$ and
\[
  \sum_{j\in\bbZ}\psi(2^{-j}\xi)=1,\quad\mbox{$\xi\neq 0$.}
\]
We have
\begin{eqnarray*}
&&|T_m(\vec{f^\infty})(z)-T_m(\vec{f^\infty})(x)|\\
&\le&\sum_{j\in\bbZ}|T_{m_j}(\vec{f^\infty})(z)-T_{m_j}(\vec{f^\infty})(x)|\\
&\le&\sum_{j\in\bbZ}\int_{\bbR^{Nn}\setminus (Q^*)^N}|\breve{m_j}(z-y_1, \cdots, z-y_N)-\breve{m_j}(x-y_1, \cdots, x-y_N)|\\
&&\quad\cdot
\prod_{i=1}^N|f_i(y_i)|d\vec{y}\\
&=&\sum_{j\in\bbZ}\sum_{k=0}^\infty\int_{(2^{k+1}Q^*)^N\setminus (2^kQ^*)^N}|\breve{m_j}(z-y_1, \cdots, z-y_N)\\
&&\quad-\breve{m_j}(x-y_1, \cdots, x-y_N)|\cdot
\prod_{i=1}^N|f_i(y_i)|d\vec{y}\\
&:=&\sum_{k=0}^\infty I_k.
\end{eqnarray*}
For any $k=0,1,2,\cdots$, we have
\begin{eqnarray*}
I_k&=&\sum_{j\in\bbZ}\int_{(2^{k+1}Q^*)^N\setminus (2^kQ^*)^N}|\breve{m_j}(z-y_1, \cdots, z-y_N)\\
&&\quad-\breve{m_j}(x-y_1, \cdots, x-y_N)|\cdot
\prod_{i=1}^N|f_i(y_i)|d\vec{y}\\
&\le& \sum_{j\in\bbZ}\bigg(\int_{(2^{k+1}Q^*)^N\setminus (2^kQ^*)^N}|\breve{m_j}(z-y_1, \cdots, z-y_N)\\
&&\quad
\!-\breve{m_j}(x-y_1, \cdots, x-y_N)|^{p_0'}d\vec{y}\bigg)^{1/{p_0'}} \!\!
\bigg(\int_{(2^{k+1}Q^*)^N}\prod_{i=1}^N|f_i(y_i)|^{p_0}d\vec{y}\bigg)^{1/{p_0}}\\
&:=&\sum_{j\in\bbZ} J_{k,j}\cdot\bigg(\int_{(2^{k+1}Q^*)^N}\prod_{i=1}^N|f_i(y_i)|^{p_0}d\vec{y}\bigg)^{1/{p_0}}.
\end{eqnarray*}
Let $h=z-x$ and $\tilde{Q}=x-Q^*$. We have
\begin{eqnarray*}
J_{j,k}&=&\bigg(\int_{(2^{k+1}Q^*)^N\setminus (2^kQ^*)^N}|\breve{m_j}(z-y_1, \cdots, z-y_N)\\ &&\quad
-\breve{m_j}(x-y_1, \cdots, x-y_N)|^{p_0'}d\vec{y}\bigg)^{1/{p_0'}}\\
&=&\bigg(\int\limits_{(2^{k+1}\tilde{Q})^N\setminus (2^k\tilde{Q})^N}
  \hskip -14pt |\breve{m_j}(h+y_1, \cdots, h+y_N)
-\breve{m_j}(y_1, \cdots, y_N)|^{p_0'}d\vec{y}\bigg)^{1/{p_0'}}\\
&\le&2\bigg(\int_{c_1 2^kl(Q)\le  |y|\le c_2 2^k l(Q)}|\breve{m_j}(y_1, \cdots, y_N)|^{p_0'}d\vec{y}\bigg)^{1/{p_0'}}\\
&\lesssim&(2^{k} l(Q))^{-s}\bigg(\int_{c_1 2^kl(Q)\le |y|\le  c_2 2^k l(Q)}(4\pi^2|y_1|^2+\cdots+4\pi^2|y_N|^2)^{sp_0'/2}\\
&&\quad\cdot|\breve{m_j}(y_1, \cdots, y_N)|^{p_0'}d\vec{y}\bigg)^{1/{p_0'}}\\
&\le&(2^{k} l(Q))^{-s}\bigg(\int_{\bbR^{Nn}}(4\pi^2|y_1|^2+\cdots+4\pi^2|y_N|^2)^{sp_0'/2}\\
&&\quad\cdot|2^{-jNn}\breve{m_j}(2^{-j}y_1, \cdots, 2^{-j}y_N)|^{p_0'}d\vec{y}\bigg)^{1/{p_0'}}2^{j(Nn/{p_0}-s)}\\
&\le&(2^{k} l(Q))^{-s}\bigg(\int_{\bbR^{Nn}}(1+4\pi^2|y_1|^2+\cdots+4\pi^2|y_N|^2)^{sp_0'/2}\\
&&\quad\cdot|2^{-jNn}\breve{m_j}(2^{-j}y_1, \cdots, 2^{-j}y_N)|^{p_0'}d\vec{y}\bigg)^{1/{p_0'}}2^{j(Nn/{p_0}-s)}\\
&\lesssim&(2^k l(Q))^{-s}2^{j(Nn/{p_0}-s)}\|m(2^j\cdot)\psi\|_{H^s},
\end{eqnarray*}
where Schwarz's inequality, Young's inequality and Plancherel's theorem are used in the last step, see \cite[Lemma 3.3]{T}.
Suppose that $2^{-l}\le l(Q)<2^{-l+1}$. Then we have
\begin{eqnarray}
\sum_{j\ge l}J_{j,k}&\lesssim&\sup_{j}\|m(2^j\cdot)\psi\|_{H^s}\sum_{j\ge l}(2^k l(Q))^{-s}2^{j(Nn/{p_0}-s)}\label{eq:e9}\\
&\lesssim&\sup_{R>0}\|m(R\xi)\chi^{}_{\{1<|\xi|<2\}}\|_{H^s}2^{-ks}l(Q)^{-Nn/{p_0}}.\nonumber
\end{eqnarray}
On the other hand, we also have
\begin{eqnarray*}
J_{j,k}
&=&\bigg(\int\limits_{(2^{k+1}\tilde{Q})^N\setminus (2^k\tilde{Q})^N}
  \hskip -14pt |\breve{m_j}(h+y_1, \cdots, h+y_N)
-\breve{m_j}(y_1, \cdots, y_N)|^{p_0'}d\vec{y}\bigg)^{1/{p_0'}}\\
&\le&
  \bigg(\int_{(2^{k+1}\tilde{Q})^N\setminus (2^k\tilde{Q})^N}
   \!\!\bigg(\!\!\int_0^1\!|\vec{h}\cdot\nabla\breve{m_j}(y_1+\theta h, \cdots, y_N+\theta h)|d\theta\bigg)^{p_0'}d\vec{y}\bigg)^{1/{p_0'}}\\
&\le&\int_0^1\bigg(\int_{(2^{k+1}\tilde{Q})^N\setminus (2^k\tilde{Q})^N}|\vec{h}\cdot\nabla\breve{m_j}(y_1+\theta h, \cdots, y_N+\theta h)|^{p_0'}d\vec{y}\bigg)^{1/{p_0'}}d\theta\\
&\le&\bigg(\int_{c_1 2^kl(Q)\le |y|\le c_2 2^k l(Q)}|\vec{h}\cdot\nabla\breve{m_j}(y_1, \cdots, y_N)|^{p_0'}d\vec{y}\bigg)^{1/{p_0'}},
\end{eqnarray*}
where $\vec{h}=(h,\cdots,h)\in\bbR^{Nn}$. Since
\begin{eqnarray*}
\vec{h}\cdot\nabla\breve{m_j}(y_1, \cdots, y_N)&=&\sum_{r=1}^{Nn}h_r\partial_r\breve{m_j}(y_1, \cdots, y_N),
\end{eqnarray*}
we have
\begin{eqnarray*}
J_{j,k}&\lesssim&\sum_{r=1}^{Nn}l(Q)\bigg(\int_{c_1 2^kl(Q)\le |y|\le c_2 2^k l(Q)}|\partial_r\breve{m_j}(y_1, \cdots, y_N)|^{p_0'}d\vec{y}\bigg)^{1/{p_0'}}\\
&\lesssim&\sum_{r=1}^{Nn}l(Q)(2^{k} l(Q))^{-s}\bigg(\int_{\bbR^{Nn}}(1+4\pi^2|y_1|^2+\cdots+4\pi^2|y_N|^2)^{sp_0'/2}\\
&&\quad\cdot|2^{-jNn}\partial_r\breve{m_j}(2^{-j}y_1, \cdots, 2^{-j}y_N)|^{p_0'}d\vec{y}\bigg)^{1/{p_0'}}2^{j(Nn/{p_0}-s)}\\
&\lesssim&\sum_{r=1}^{Nn}l(Q)(2^{k} l(Q))^{-s}2^{j(Nn/{p_0}-s)}2^j\|m(2^j\xi)\xi_r\psi(\xi)\|_{H^s}\\
&\lesssim&\sup_{R>0}\|m(R\xi)\chi^{}_{\{1<|\xi|<2\}}\|_{H^s}l(Q)(2^{k} l(Q))^{-s}2^{j(Nn/{p_0}-s)}2^j.
\end{eqnarray*}
By Proposition~\ref{prop:loptg}, $Nn/{p_0}>s-1$. It follows that
\begin{equation}\label{eq:e10}
\sum_{j<l} J_{j,k}\lesssim\sup_{R>0}\|m(R\xi)\chi^{}_{\{1<|\xi|<2\}}\|_{H^s}2^{-ks}l(Q)^{-Nn/{p_0}}.
\end{equation}
Combining the arguments above we get
\begin{eqnarray*}
|T_m(\vec{f^\infty})(z)-T_m(\vec{f^\infty})(x)|
&\lesssim&\sum_{k=0}^{\infty}2^{-k(s-Nn/{p_0})}\mathcal{M}_{p_0}(\vec{f})\\
&\lesssim&\mathcal{M}_{p_0}(\vec{f}).
\end{eqnarray*}

What remains to be considered are the terms in (\ref{eq:e8}) such that $\alpha_{i_1}=\cdots=\alpha_{i_\gamma}=0$
for some $\{i_1,\cdots,i_\gamma\}\subset\{1,\cdots,N\}$ and $1\le \gamma<N$. We have
\begin{eqnarray*}
&&|T_m(f_1^{\alpha_1},\cdots,f_N^{\alpha_N})(z)-T_m(f_1^{\alpha_1},\cdots,f_N^{\alpha_N})(x)|\\
&\le&\sum_j|T_{m_j}(f_1^{\alpha_1},\cdots,f_N^{\alpha_N})(z)-T_{m_j}(f_1^{\alpha_1},\cdots,f_N^{\alpha_N})(x)|\\
&\le&\sum_j\prod_{i\in\{i_1,\cdots,i_\gamma\}}\int_{Q^*}|f_i(y_i)|dy_i\int_{(\bbR^{n}\setminus Q^*)^{N-\gamma}}|\breve{m_j}(z-y_1, \cdots, z-y_N)\\&&\quad-\breve{m_j}(x-y_1, \cdots, x-y_N)|
\prod_{i\notin \{i_1,\cdots,i_\gamma\}}|f_i(y_i)|dy_i\\
&=&\sum_j\sum_{k=0}^\infty\prod_{i\in\{i_1,\cdots,i_\gamma\}}\int_{Q^*}|f_i(y_i)|dy_i
 \int\limits_{(2^{k+1}Q^*\setminus 2^kQ^*)^{N-\gamma}}
 \hskip -16pt |\breve{m_j}(z-y_1, \cdots, z-y_N)\\&&\quad-\breve{m_j}(x-y_1, \cdots, x-y_N)|
\prod_{i\notin \{i_1,\cdots,i_\gamma\}}|f_i(y_i)|dy_i\\
&\le&\sum_j\sum_{k=0}^\infty\bigg(\int_{(Q^*)^\gamma\times(2^{k+1}Q^*\setminus 2^kQ^*)^{N-\gamma}}|\breve{m_j}(z-y_1, \cdots, z-y_N)\\&&\quad-\breve{m_j}(x-y_1, \cdots, x-y_N)|^{p_0'}d\vec{y}\bigg)^{1/{p_0'}}
\bigg(\int_{(2^{k+1}Q^*)^N}\prod_{i=1}^N|f_i(y_i)|^{p_0}d\vec{y}\bigg)^{1/{p_0}}.
\end{eqnarray*}
Then by similar arguments as above we get that
\[
  |T_m(f_1^{\alpha_1},\cdots,f_N^{\alpha_N})(z)-T_m(f_1^{\alpha_1},\cdots,f_N^{\alpha_N})(x)|\le C\mathcal{M}_{p_0}(\vec{f})(x).
\]
This completes the proof.
\end{proof}

Now we are ready to prove Theorem~\ref{thm:main}.
\begin{proof}[Proof of Theorem~\ref{thm:main}]
By  Proposition~\ref{prop:p1} and Lemma~\ref{lm:main}, we have
\begin{eqnarray*}
 \|T_m (\vec{f})\|_{L^p(v_{\vec{w}})}\le \|M_\delta(T_m (\vec{f}))\|_{L^p(v_{\vec{w}})}
 &\le& C_{n,p,\delta, \vec{w}}\|M_\delta^\sharp(T_m (\vec{f}))\|_{L^p(v_{\vec{w}})}\\
 &\le& C\|\mathcal{M}_{p_0}(\vec{f})\|_{L^p(v_{\vec{w}})}.
\end{eqnarray*}
  Now the desired conclusion follows from
Proposition~\ref{prop:bd}.
\end{proof}

\begin{proof}[Proof of Corollary~\ref{cor:c1}]
Without loss of generality  assume that
\[
  p_1=\min\{p_1,\cdots, p_N\}.
\] 
  As in \cite{FT}, we set
\[
  m_1=m(-(\xi_1+\cdots+\xi_N),\xi_2,\cdots,\xi_N).
\]
Then we can write
\begin{equation}\label{eq:e11}
\int_{\bbR^n}T_{m}(f_1,\cdots,f_N)gdx=\int_{\bbR^n} T_{m_1}(g,f_2,\cdots,f_N)f_1 dx
\end{equation}
for all $g$, $f_1,\cdots,f_N\in\mathscr{S}(\bbR^n)$.
By a change of variables we get (see also \ref{FT})
\[
  \sup_{R>0}\|m_1(R\xi)\chi^{}_{\{1<|\xi|<2\}}\|_{H^s(\bbR^{Nn})}\le
  C\sup_{R>0}\|m(R\xi)\chi^{}_{\{1<|\xi|<2\}}\|_{H^s(\bbR^{Nn})}<\infty.
\]
Since $1/{p_1}+\cdots+1/{p_N}=1/p$, we have $1/{p'}+1/{p_2}+\cdots+1/{p_N}=1/{p_1'}$. Therefore,
$Nn/s<p'_1<\min\{p', p_2, \cdots, p_N\}$ due to $\min\{p_1,\cdots, p_N\}<(Nn/s)'$. Since
\[
  v_{\vec{w}}^{(1-p')p_1'/{p'}}w_2^{p_1'/{p_2}}\cdots w_N^{p_1'/{p_N}}=w_1^{1-p_1'},
\]
it follows from Theorem~\ref{thm:main} that
\begin{equation}\label{eq:e12}
\|T_{m_1}(\vec{f})\|_{L^{p_1'}(w_1^{1-p_1'})}\le C \|f_1\|_{L^{p'}(v_{\vec{w}}^{1-p'})}\prod_{i=2}^N \|f_i\|_{L^{p_i}(w_i)}.
\end{equation}
Then by duality and $(\ref{eq:e11})$ and $(\ref{eq:e12})$, we have
\begin{eqnarray*}
\|T_m(\vec{f})\|_{L^p(v^{}_{\vec{w}})}&=&
         \sup_{\|g\|_{L^{p'}(v_{\vec{w}}^{1-p'})}=1}|\langle T_{m}(f_1,\cdots,f_N),g\rangle|\\
&=&
         \sup_{\|g\|_{L^{p'}(v_{\vec{w}}^{1-p'})}=1}|\langle T_{m_1}(g,\cdots,f_N),f_1\rangle|\\
&\le&
         \sup_{\|g\|_{L^{p'}(v_{\vec{w}}^{1-p'})}=1}\|T_{m_1}(g,f_2,\cdots,f_N)\|_{L^{p_1'}(w_1^{1-p_1'})}
         \|f_1\|_{L^{p_1}(w_1)}\\
&\le&    C \prod_{i=1}^N \|f_i\|_{L^{p_i}(w_i)}.
\end{eqnarray*}
This completes the proof.
\end{proof}

\begin{Lemma}\label{lm:main1}
Suppose that $m\in L^\infty(\bbR^{Nn})$ satisfies $(\ref{eq:e3})$, that $\vec{b}\in BMO^N$ and that $0<\delta<\epsilon<p_0/N$. Then for
any $q_0>p_0$, there exists some constant $C>0$ such that
\[
  M_\delta^\sharp (T_{m;\vec{b}}(\vec{f}))(x)\le C\|\vec{b}\|_{BMO^N}(M_\epsilon(T_m(\vec{f}))(x)+ \mathcal{M}_{q_0}(\vec{f})(x))
\]
for all $N$-tuples $\vec{f}=(f_1,\cdots,f_N)$ of bounded measurable functions with compact support.
\end{Lemma}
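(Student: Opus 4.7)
The plan is to follow the standard commutator scheme and reduce the estimate, term by term, to the pointwise bound already established in the proof of Lemma~\ref{lm:main}. Since $T_{m;\vec{b}} = \sum_{i=1}^N T_{m;\vec{b}}^i$, I would handle each $T_{m;\vec{b}}^i$ separately; say $i=1$, the others being analogous. For any constant $\lambda \in \bbR$ we have the commutation identity
\[
  T_{m;\vec{b}}^1(\vec{f})(z) = (b_1(z) - \lambda)T_m(\vec{f})(z) - T_m((b_1 - \lambda)f_1, f_2, \ldots, f_N)(z).
\]
Fix a cube $Q \ni x$ and take $\lambda = (b_1)_{Q^*}$ with $Q^* = 4\sqrt{n}Q$. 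Since $\bigl||\alpha|^\delta - |\beta|^\delta\bigr| \le |\alpha - \beta|^\delta$ for $0 < \delta < 1$, it suffices to bound, for a suitable constant $c_Q$, the quantity $\bigl(\tfrac{1}{|Q|}\int_Q |T_{m;\vec{b}}^1(\vec{f})(z) - c_Q|^\delta dz\bigr)^{1/\delta}$ by the right-hand side.

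The first piece $(b_1 - \lambda)T_m(\vec{f})$ would be treated by H\"{o}lder's inequality with exponents $\epsilon/\delta > 1$ and its conjugate, followed by the John-Nirenberg inequality on $Q$:
\[
  \left(\tfrac{1}{|Q|}\int_Q |b_1 - \lambda|^\delta |T_m(\vec{f})|^\delta dz\right)^{1/\delta} \le C\|b_1\|_{BMO}\,M_\epsilon(T_m(\vec{f}))(x).
\]
For the second piece I would mimic the proof of Lemma~\ref{lm:main}: split $f_j = f_j^0 + f_j^\infty$ with $f_j^0 = f_j \chi^{}_{Q^*}$ for $j \ge 2$, decompose $(b_1 - \lambda)f_1$ in the same way, expand the product inside $T_m$, and choose
\[
  c_Q = \sum_{(\alpha_1,\ldots,\alpha_N)\in\mathcal{I}} T_m((b_1-\lambda)f_1^{\alpha_1}, f_2^{\alpha_2}, \ldots, f_N^{\alpha_N})(x).
\]
The fully-local contribution (all $\alpha_i = 0$) would be handled via Kolmogorov's inequality and the strong boundedness $L^{p_0}\times\cdots\times L^{p_0} \to L^{p_0/N}$ supplied by Proposition~\ref{prop:gs}; a single further H\"{o}lder step with exponent $q_0/p_0 > 1$ together with John-Nirenberg on $Q^*$ absorbs the factor $(b_1 - \lambda)$ at a cost of $C\|b_1\|_{BMO}$, delivering $C\|b_1\|_{BMO}\mathcal{M}_{q_0}(\vec{f})(x)$.

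For the remaining terms (at least one $\alpha_i = \infty$) the kernel-difference estimates on $J_{j,k}$ derived in the proof of Lemma~\ref{lm:main} apply verbatim; only the accompanying multilinear $L^{p_0}$ integral over $(2^{k+1}Q^*)^N$ is modified by an extra factor $|b_1 - \lambda|^{p_0}$ in the first slot. One more H\"{o}lder step with $q_0/p_0$ combined with John-Nirenberg on the dilated cube---using $|(b_1)_{Q^*} - (b_1)_{2^{k+1}Q^*}| \lesssim (k+1)\|b_1\|_{BMO}$---trades this factor for $C(k+1)\|b_1\|_{BMO}$ while converting the $L^{p_0}$ averages into $L^{q_0}$ averages, so that $\mathcal{M}_{q_0}(\vec{f})(x)$ replaces $\mathcal{M}_{p_0}(\vec{f})(x)$. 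The main obstacle is making sure the additional polynomial factor $(k+1)$ does not destroy the geometric summability in $k$: the original proof yielded decay $2^{-k(s - Nn/p_0)}$, and since $s > Nn/p_0$ strictly (by Proposition~\ref{prop:loptg}, as $p_0 = rr_0 > r_0 = Nn/s$), the series $\sum_{k\ge 0}(k+1)\,2^{-k(s - Nn/p_0)}$ still converges. Summing over $k$ and over the finitely many configurations in $\mathcal{I}$, then adding the contribution from the first piece, completes the bound.
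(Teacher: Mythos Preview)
Your proposal is correct and follows essentially the same route as the paper: reduce by linearity to a single commutator slot, use the identity $T_{m;b}(\vec{f})=(b-b_{Q^*})T_m(\vec{f})-T_m((b-b_{Q^*})f_1,f_2,\ldots,f_N)$, control the first piece via H\"older plus John--Nirenberg to produce $M_\epsilon(T_m(\vec{f}))$, and treat the second piece by the Calder\'on--Zygmund splitting and kernel estimates of Lemma~\ref{lm:main}, upgrading $p_0$-averages to $q_0$-averages with a further H\"older step. Your explicit observation that the extra $(k+1)$ factor from John--Nirenberg on dilated cubes is harmless against the decay $2^{-k(s-Nn/p_0)}$ is exactly the detail the paper suppresses in its final ``$\lesssim \|b\|_{BMO}\mathcal{M}_{q_0}(\vec{f})(x)$'' step.
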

\begin{proof}
By linearity it suffices to consider the case of $\vec{b}=(b,0,\cdots,0)\in BMO^N$. Fix $b\in BMO$ and
consider the operator
\[
  T_{m;b}(\vec{f})=bT_m(\vec{f})-T_m(bf_1,f_2,\cdots,f_N).
\]
Fix $x\in\bbR^n$. For any cube $Q$ centered at $x$, set $Q^*=4\sqrt{n}Q$. Then we have
\[
   T_{m;b}(\vec{f})=(b-b_{Q^*})T_m(\vec{f})-T_m((b-b_{Q^*})f_1,f_2,\cdots,f_N).
\]
Since $0<\delta<1$, we have
\begin{eqnarray*}
&&\bigg(\frac{1}{|Q|}\int_Q \bigg||T_{m;b}(\vec{f})(z)|^\delta -|c|^\delta\bigg|dz\bigg)^{1/\delta}\\
&\le& \bigg(\frac{1}{|Q|}\int_Q |T_{m;b}(\vec{f})(z) -c|^\delta dz\bigg)^{1/\delta}\\
&\lesssim&\bigg(\frac{1}{|Q|}\int_Q |(b-b_{Q^*})T_m(\vec{f})(z)|^\delta dz\bigg)^{1/\delta}\\
&&\quad+\bigg(\frac{1}{|Q|}\int_Q |T_m((b-b_{Q^*})f_1,f_2,\cdots,f_N)(z) -c|^\delta dz\bigg)^{1/\delta}\\
&:=&I+II.
\end{eqnarray*}
For any $1<q<\epsilon/\delta$, by H\"{o}lder and John-Nirenberg inequality, we have
\begin{eqnarray*}
I&\le& \bigg(\frac{1}{|Q|}\int_Q |b-b_{Q^*}|^{q'\delta} dz\bigg)^{1/{q'\delta}}
       \bigg(\frac{1}{|Q|}\int_Q |T_m(\vec{f})(z)|^{q\delta} dz\bigg)^{1/{q\delta}}\\
 &\lesssim& \|b\|^{}_{BMO}M_\epsilon(T_m(\vec{f}))(x).
\end{eqnarray*}
Using the similar decomposition as that in the proof of Lemma~\ref{lm:main}, we can write
\begin{eqnarray*}
\prod_{i=1}^Nf_i(y_i)&=&\sum_{\alpha_1,\cdots,\alpha_N\in\{0,\infty\}}f_1^{\alpha_1}(y_1)\cdots f_N^{\alpha_N}(y_N)\\
&=&\prod_{i=1}^N f_i^0(y_i)+\sum_{\alpha_1,\cdots,\alpha_N\in \mathcal{I}} f_1^{\alpha_1}(y_1)\cdots f_N^{\alpha_N}(y_N),
\end{eqnarray*}
Let $c=\sum_{\alpha_1,\cdots,\alpha_N\in \mathcal{I}}T_m((b-b_{Q^*})f_1,f_2,\cdots,f_N)(x)$.
We have
\begin{eqnarray*}
II&\lesssim& \bigg(\frac{1}{|Q|}\int_Q |T_m((b-b_{Q^*})f_1^0,f_2^0,\cdots,f_N^0)(z)|^\delta dz\bigg)^{1/\delta}\\
&&\quad+\sum_{\alpha_1,\cdots, \alpha_N}\bigg(\frac{1}{|Q|}\int_Q |T_m((b-b_{Q^*})f_1^{\alpha_1},f_2^{\alpha_2},\cdots,f_N^{\alpha_N})(z)\\
&&\quad-T_m((b-b_{Q^*})f_1^{\alpha_1},f_2^{\alpha_2},\cdots,f_N^{\alpha_N})(x)|^\delta dz\bigg)^{1/\delta}\\
&=&II_1+\sum_{\alpha_1,\cdots,\alpha_N\in \mathcal{I}}II_{\alpha_1,\cdots,\alpha_N}.
\end{eqnarray*}
We first estimate $II_1$. By Kolmogorov's and H\"{o}lder's inequalities, we have
\begin{eqnarray*}
II_1&\lesssim& \|T_m((b-b_{Q^*})f_1^0,f_2^0,\cdots,f_N^0)\|_{L^{p_0/N,\infty}(Q,dx/{|Q|})}\\
&\lesssim& \bigg(\frac{1}{|Q^*|}\int_{Q^*} |(b-b_{Q^*})f_1(z)|^{p_0}dz\bigg)^{1/{p_0}}\prod_{i=2}^N
      \bigg(\frac{1}{|Q^*|}\int_{Q^*}|f_i(z)|^{p_0}dz\bigg)^{1/{p_0}}\\
&\lesssim& \|b\|^{}_{BMO}\mathcal{M}_{q_0}(\vec{f})(x).
\end{eqnarray*}
Next we estimate $II_{\alpha_1,\cdots,\alpha_N}$.
Analysis similar to that in the proof of Lemma~\ref{lm:main}
shows that 
\begin{eqnarray*}
&&\sum_{\alpha_1,\cdots,\alpha_N\in\mathcal{I}}II_{\alpha_1,\cdots,\alpha_N}\\
&\lesssim&\sum_{k=0}^\infty 2^{-k(s-Nn/{p_0})}\bigg(\frac{1}{|2^{k+1}Q^*|}\int_{2^{k+1}Q^*}|(b-b_{Q^*})f_1(y_1)|^{p_0}dy_1\bigg)^{1/{p_0}}\\
&&\quad\cdot \prod_{i=2}^N\bigg(\frac{1}{|2^{k+1}Q^*|}\int_{2^{k+1}Q^*}|f_i(y_i)|^{p_0}dy_i\bigg)^{1/{p_0}}\\
&\lesssim& \|b\|^{}_{BMO}\mathcal{M}_{q_0}(\vec{f})(x).
\end{eqnarray*}
This completes the proof.
\end{proof}

Now we are ready to prove Theorem~\ref{thm:main1}.
\begin{proof}[Proof of Theorem~\ref{thm:main1}]
By Proposition~\ref{prop:loptg}, there is some $1<r'<\min\{p_1/{p_0}$,
$\cdots$, $p_N/{p_0}\}$ such
that $\vec{w}\in A_{\vec{P}/{(p_0r')}}$. Let $q_0=p_0r'$. By Proposition~\ref{prop:bd}, we have
\[
  \|\mathcal{M}_{q_0}(\vec{f})\|_{L^p(v^{}_{\vec{w}})}\le C \prod_{i=1}^N \|f_i\|_{L^{p_i}(w_i)}.
\]
By Proposition~\ref{prop:p1} and Lemma~\ref{lm:main},
\begin{eqnarray*}
 \|M_\epsilon(T_m (\vec{f}))\|_{L^p(v_{\vec{w}})}
 &\le& C_{n,p,\delta, \vec{w}}\|M_\epsilon^\sharp(T_m (\vec{f}))\|_{L^p(v_{\vec{w}})}\\
 &\le& C\|\mathcal{M}_{p_0}(\vec{f})\|_{L^p(v_{\vec{w}})}.
\end{eqnarray*}
Then the desired conclusion follows from Proposition~\ref{prop:bd} and Lemma~\ref{lm:main1}.
\end{proof}

\begin{proof}[Proof of Corollary~\ref{cor:c2}]
By linearity it is enough to consider the case of $\vec{b}=(b$, $0$, $\cdots$, $0)\in BMO^N$. Fix $b\in BMO$ and consider the
operator
\[
  T_{m;b}(\vec{f})=bT_m(\vec{f})-T_m(bf_1,f_2,\cdots,f_N).
\]
Notice that
\begin{eqnarray*}
&&\int_{\bbR^n}T_{m;b}(f_1,\cdots,f_N)gdx \\
&=&
\int_{\bbR^n}T_{m_1}(bg,f_2,\cdots,f_N)f_1dx-\int_{\bbR^n}T_{m_1}(g,f_2,\cdots,f_N)b f_1dx\\
&=& -\int_{\bbR^n}T_{m_1;b}(g,\cdots,f_N)f_1dx.
\end{eqnarray*}
In much the same way as  in the proof of Corollary~\ref{cor:c1} we can get the conclusion desired.
\end{proof}

\end{document}